\numberwithin{equation}{section}
\numberwithin{figure}{section}
\title{Elliptic Fibrations on Supersingular K3 Surface with Artin invariant 1 in characteristic 3}
\author{Tathagata Sengupta}
\address{Department of Mathematics, University of Hyderabad, Hyderabad, India}
\email{tsengupta@gmail.com}
\theoremstyle{definition}
\newtheorem{definition}{Definition} 
\theoremstyle{plain}
\newtheorem{theorem}{Theorem} 
\theoremstyle{plain}
\newtheorem{lemma}{Lemma}
\theoremstyle{plain}
\theoremstyle{remark}
\newtheorem{remark}{Remark} 
\theoremstyle{definition}
\thanks{The author thanks his thesis advisor Prof. Abhinav Kumar for his precious guidance. And also the mathematics departments of Brandeis University, Massachusetts Institute of Technology and University of Hyderabad for invaluable help during the period of this project.}
\date{April 23, 2012}
\begin{document}

\begin{abstract}
We describe elliptic models with section on the Shioda supersingular $K3$ surface $X$ of Artin invariant $1$ over an algebraically closed field of characteristic $3$. We compute elliptic parameters and Weierstrass equations for the fifty two different fibrations, and analyze some of the reducible fibers and Mordell-Weil lattices.
\end{abstract}

\maketitle

\section{Introduction}

\subsection{Definitions and examples}
\begin{definition}
$K3$ \textit{surfaces} are smooth algebraic surfaces $X$ for which the canonical divisor $\mathcal{K}_X \cong \mathcal{O}_X$ and $H^1(X,\mathcal{O}_X) = 0$.
\end{definition}

\begin{definition}
\textit{Supersingular surfaces} (in the sense of Shioda) are those for which all cycles in the $2-$dimensional $\acute{e}$tale cohomology groups are algebraic.
\end{definition}

The \textit{N$\acute{\text{e}}$ron-Severi group}, the group of divisors modulo algebraic equivalence, denoted by $NS(X)$, is a lattice of rank, say $\rho$. The number $\rho$ is called the rank of the $K3$. In characteristic $0$, we have the inequality $\rho\leq b_2-2$, where $b_2$ is the second Betti number. In characteristic $p$ however, we can only say $\rho\leq b_2$. In the case of $K3$ surfaces, $b_2= 22$. Thus, $\rho\leq 22$. According to the definition, (Shioda) supersingular $K3$ surfaces are the ones for which $\rho=22$. The existence of such surfaces is a phenomenon particular to positive characteristic. We now list some of the basic properties of $K3$ surfaces. 
The geometric genus, $p_g= \dim H^2(X,\mathcal{O}_X)=h^{0,2}=1$, because, by Serre duality, $h^2(\mathcal{O}_X)=h^0(X,\mathcal{K}_X \otimes\mathcal{O}_X)= h^0(\mathcal{O}_X))$. Its arithmetic genus, $p_{a}=1$. Hence, the topological Euler characteristic $\chi(\mathcal{O}_X)= h^0(\mathcal{O}_X)-h^1(\mathcal{O}_X)+h^2(\mathcal{O}_X)=2$. We also have that $c_1^2(X)=0$, since $c_1^2(X)=\mathcal{K}_X\cdot \mathcal{K}_X$. Using Noether's formula $c_1^2(X)+c_2(X)=12\chi(\mathcal{O}_X)$, we get the second Chern number, $c_2(X)=24$. Since $b_1(X)=2h^1(\mathcal{O}_X)$, we get that $b_1(X)=0$. Thus, the Betti numbers must be $$b_0=1, b_1=0, b_2=22, b_3=0, b_4=1$$
Some of the best known examples of supersingular $K3$ surfaces are:
\begin{enumerate} 
\item
  \textit{Fermat Quartic}. The Fermat quartic is a hypersurface in $\mathbb{P}^3$ given by the equation $$x^4+y^4+z^4+w^4=0$$
It is a supersingular $K3$ surface over fields of characteristic $3$ mod $4$. 

\item
\textit{Kummer Surfaces}. The Kummer surface of the product of $2$ supersingular elliptic curves over fields of odd characteristic is a supersingular $K3$ surface. We describe in detail the Kummer surface associated to the product of the supersingular elliptic curve $y^2=x^3-x$ with itself, over a field of characteristic $3$.

\item
\textit{Double sextic}. The $double$ $sextic$ is a double cover of the projective plane $\mathbb{P}^2$, branched over a sextic. A $K3$ surface with polarization of degree $2$ can be written as a double cover of $\mathbb{P}^2$, branched along a sextic. The supersingular $K3$ surface of Artin invariant $1$ in characteristic $3$ admits such a model. 
\end{enumerate}

Many of the ideas underlying this work are based on the work of Tetsuji Shioda \cite{Sh1}, \cite{Sh2} Noam Elkies, Matthias Schuett and Abhinav Kumar \cite{Ku1}, \cite{Ku2}, among others. The author is grateful to all the above mathematicians for their amazing contributions and insights. 

\subsection{Elliptic Fibrations}

Let $k$ be an algebraically closed field, and let $C$ be a smooth curve over $k$. 
\begin{definition}
An \textit{elliptic surface} $X$ over $C$ is a smooth projective surface $X$ with an elliptic fibration over $C$, that is, there is a surjective morphism $f:X\to C$ such that all but finitely many fibers are smooth curves of genus $1$, no fiber contains an exceptional curve of the first kind (that is $X\to C$ is \textit{relatively minimal}). Moreover, $f$ has a \textit{section}, that is, a smooth morphism $s:C\to X$ such that $f\circ s=id_C$. We also require that $X$ has at least one singular fiber, so that it is not isomorphic to the product $E\times C'$ for some elliptic curve $E$ after base-change by a finite etale map $C'\to C$. 
\end{definition}
We can use the zero section to describe the Weierstrass form. The generic fiber $E$ can then be regarded as an elliptic curve over the function field $k(C)$. The general Weirstrass form looks like 

$$y^2+a_1(t)xy+a_3(t)y=x^3+a_2(t)x^2+a_4(t)x+a_6(t), \text{ with }a_i(t)\in k(t)$$

which, in characteristic $3$, can be simplified to $$y^2=x^3+a_2(t)x^2+a_4(t)x+a_6(t)$$

 The sections form an abelian group $E(k(C))$. The zero section will be denoted by $O$ throughout this paper. The sections are in a natural one-to-one correspondence with the $k(C)$-rational points on $E$.\\

The \textit{Kodaira-N$\acute{\text{e}}$ron model} gives a way to associate an elliptic surface $X\to C$ over the ground field $k$, given the generic fiber $E$ over $k(C)$. The first step is to remove all the points from $C$ at which the Weierstrass form is singular, and then filling in suitable singular fibers. This process is formally described by Tate's algorithm which we use extensively. It is a well-known fact that the Kodaira-N$\acute{\text{e}}$ron model is unique, given the Weierstrass form.  

\subsubsection{Singular fibers} 
The singular fibers can be read off from the Weierstrass form using Tate's algorithm. All the irreducible components of a singular fiber are rational curves. If a singular fiber is irreducible, then it is a rational curve with a node or a cusp. In Kodaira's classification of singular fibers, these are denoted by I$_1$ and II respectively. If a singular fiber is reducible, then every component is a rational curve with self-intersection $-2$ on the surface. 

\subsubsection{Elliptic K3 surfaces}
In this case, the base curve is $\mathbb{P}^1$. The degrees of the coefficients $a_i$ are restricted by the condition deg$(a_i(t))\leq 2i$, and some deg$(a_i(t))>i$, or else it gives a rational elliptic surface. At $t=\infty$, we change variables by $s=\frac{1}{t}$, and the coefficients become $a_i'(s)=s^{2i}a_i(\frac{1}{s})$. Thus we can homogenise the coefficients to polynomials of degree $2i$ in two variables, $s$ and $t$. The discriminant becomes then a homogeneous polynomial of degree $24$ (in $s$ and $t$). Thus, the Weierstrass form of a elliptic $K3$ surface over $\mathbb{P}^1$ can be seen as a hypersurface in the weighted projective space $\mathbb{P}[1,1,4,6]$. 

\subsubsection{Neron-Severi lattice and Mordell-Weil group}
\begin{definition}
A key invariant of a $K3$ surface $X$ is its N$\acute{\text{e}}$ron-Severi lattice $NS(X)=NS_{\bar{k}}(X)$. This is the usual N$\acute{\text{e}}$ron-Severi group of divisors defined over $\bar{k}$ modulo algebraic equivalence, equipped with the symmetric integer-valued bilinear form induced from the intersection pairing on the divisors. For $K3$ surfaces, the notions of algebraic equivalence, numerical equivalence and linear equivalence coincide, which implies that the \textit{Picard group} of a $K3$ surface is naturally isomorphic to the N$\acute{\text{e}}$ron-Severi group. 
\end{definition}
For a $K3$ surface, this is an abelian group and the pairing is \textit{even}, that is, $C\cdot C \in 2\mathbb{Z}$ for all $C\in NS(X)$. By the Hodge index theorem, the pairing is non-degenerate of signature $(1,\rho-1)$, where $\rho$ is the rank of $NS(X)$ (also known as the $rank$ of the $K3$). In our case, $\rho=22$, in other words, $X$ is a \textit{supersingular} $K3$ surface. By Artin \cite{Ar}, the discriminant of the Neron-Severi lattice is $-p^{2\sigma}$, where $p$ is the characteristic of the base field $k$, and $\sigma$ is an invariant for the surface, known as the \textit{Artin invariant}. It is also known that $1\leq \sigma \leq 10$. In this paper, we will largely deal with the supersingular $K3$ surface over characteristic $3$, with Artin invariant $1$. Note the definite article here, since there can be only one, upto isomorphism, as shown in \cite{Og}. \\

If $X\to C$ is an elliptic fibration with a zero section, $NS(X)$ contains two special classes, the fiber $F$ (preimage of any point of $\mathbb{P}^1$ under $f$), and the image of the zero section $O$. The intersection pairing they satisfy is $F\cdot F=0$, $O\cdot O =-2$ and $F\cdot O=1$. Hence the sublattice $\mathcal{U}$ generated by $F$ and $O$ is isomorphic to the \textit{hyperbolic plane} ($\mathcal{U}$). Conversely, any copy of $\mathcal{U}$ in $NS(X)$ describes $X$ as an elliptic surface. One of the generators or its negative is \textit{effective}, and has two independent sections, whose ratio gives the map to $\mathbb{P}^1$.  

\begin{definition}
The \textit{essential lattice} is the orthogonal complement of the copy of $\mathcal{U}$ in $NS(X)$, and is denoted by $NS_{\text{ess}}$. It is a positive definite lattice. 
\end{definition}

\begin{definition}
The \textit{Mordell-Weil group} of the surface $f: X\to C$ is the (abelian) group of sections from $C\to X$. This is also naturally identified with the $k(C)$-rational points of $E$, denoted by $E(k(C))$, where $E$ is the generic fiber for the elliptic fibration defined by $f: X\to C$.
\end{definition}
According to teh Mordell-Weil theorem \cite{Si}, $E(k(C))$ is a finitely generated group. 

\begin{definition}
The \textit{trivial lattice} $T$ is the lattice generated by the classes $O$, $F$ and the $F_{\nu,i}$, where $O$ is the zero section, $F$ is the class of the fiber, and $F_{\nu,i}$ is the $i$-th non-identity component of the reducible fiber at $\nu$. Here, the \textit{identity component} of a reducible fiber is the component intersecting the zero section. 
\end{definition}
Thus, if $m_\nu$ is the number of components of the fiber $F_\nu$, then the rank$(T)=2+\sum_{\nu\in P}(m_\nu-1)$, where $P$ is the set of points of $\mathbb{P}^1(k)$ where the fibers are reducible. The Shioda-Tate formula gives the following relation between the Mordell-Weil group and the Neron-Severi lattice: 

$$E(k(C))\cong NS(X)/T$$

Let $R \subset  NS_{\text{ess}}$ be the \textit{root lattice} of  $NS_{\text{ess}}$, that is the sublattice spanned by the vectors of norm $2$ (known as \textit{roots}). Then  $NS_{\text{ess}}/R$ can be canonically identified with the Mordell-Weil group $E(k(C))$. $R$ itself is a direct sum of root lattices $A_n$ $(n\geq 1)$, $D_n$ $n\geq 4$, $E_6$, $E_7$ or $E_8$, with each factor indicating a reducible fiber of the corresponding type. 

\subsubsection{Dynkin diagrams} 
The Dynkin diagrams corresponding to the root systems are given in Figure $1.1$. Here the nodes indicate roots (rational curves with self-intersection $-2$), and two nodes are connected by an edge if and only if the corresponding rational curves intersect, in which case the intersection number is $1$.

\begin{figure}[ht!]
\setlength{\unitlength}{.45in}
\begin{picture}(10,4.5)(0.4,-0.3)
\thicklines


  \put(1,3){\circle*{.1}}
    \put(1,3.55){\makebox(0,0)[l]{${A}_1$}}

\put(4.5,3){\line(1,0){0.5}}
\multiput(4.5,3)(0.5,0){2}{\circle*{.1}}
    \put(5,3.55){\makebox(0,0)[l]{${A}_2$}}

\put(8,3){\line(1,0){0.25}}
\multiput(8,3)(1.5,0){2}{\circle*{.1}}

  \put(9.25,3){\line(1,0){0.25}}
    \put(9,3.55){\makebox(0,0)[l]{${A}_n\;(n>2)$}}
      \put(8.5,3){\makebox(0,0)[l]{$\hdots$}}

\put(0.5,1.5){\line(1,0){1}}
\multiput(0.5,1.5)(1,0){2}{\circle*{.1}}
\put(1,1.5){\line(0,1){0.5}}
  \put(1,2){\circle*{.1}}
  \put(1,1.5){\circle*{.1}}
    \put(1.5,2.05){\makebox(0,0)[l]{${D}_4$}}

\put(4,1.5){\line(1,0){1.5}}
\multiput(4.5,1.5)(1,0){2}{\circle*{.1}}
\put(4.5,1.5){\line(0,1){0.5}}
  \put(4.5,2){\circle*{.1}}
    \put(4,1.5){\circle*{.1}}
  \put(5,1.5){\circle*{.1}}
    \put(5.5,2.05){\makebox(0,0)[l]{${D}_5$}}

\put(7.5,1.5){\line(1,0){0.75}}
\multiput(7.5,1.5)(0.5,0){2}{\circle*{.1}}
\put(8,1.5){\line(0,1){0.5}}
  \put(8,2){\circle*{.1}}
  \put(9.5,1.5){\circle*{.1}}
  \put(9.25,1.5){\line(1,0){0.75}}
    \put(10,1.5){\circle*{.1}}
    \put(9,2.05){\makebox(0,0)[l]{${D}_n\;(n>5)$}}
      \put(8.5,1.5){\makebox(0,0)[l]{$\hdots$}}

\put(0,0){\line(1,0){2}}
\multiput(0,0)(0.5,0){5}{\circle*{.1}}
\put(1,0){\line(0,1){0.5}}
  \put(1,0.5){\circle*{.1}}
    \put(1.5,0.55){\makebox(0,0)[l]{${E}_6$}}

\put(3.5,0){\line(1,0){2.5}}
\multiput(3.5,0)(0.5,0){5}{\circle*{.1}}
\put(5,0){\line(0,1){0.5}}
  \put(5,0.5){\circle*{.1}}
  \put(6,0){\circle*{.1}}
    \put(5.5,0.55){\makebox(0,0)[l]{${E}_7$}}

\put(7.5,0){\line(1,0){3}}
\multiput(7.5,0)(0.5,0){6}{\circle*{.1}}
\put(8.5,0){\line(0,1){0.5}}
  \put(8.5,0.5){\circle*{.1}}
  \put(10.5,0){\circle*{.1}}
    \put(10,0.55){\makebox(0,0)[l]{${E}_8$}}

\end{picture}
\caption{Dynkin diagrams}
\label{Fig:Dynkin}
\end{figure}
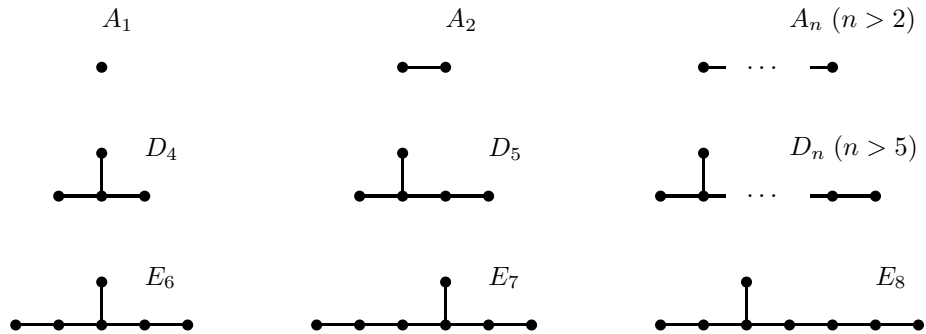

The determinants of the negative-definite lattices defined by the respective Dynkin diagrams are:\\ 
$\text{det}(A_n)=(-1)^n(n+1)$, det$(D_n)=(-1)^n4$ $(n\geq 4)$\\
det$(E_6)=3$, det$(E_7)=-2$, det$(E_8)=1$

\subsubsection{Height Pairing}
The \textit{height pairing formula} between two sections $P$ and $Q$ is given by 
$$\langle P,Q \rangle=\chi(X)+P\cdot O+Q\cdot O-P\cdot Q -\sum_\nu\text{contr}_\nu(P,Q)$$
where $\chi(X)$ is the Euler characteristic of the surface ($=2$ in the case of $K3$ surfaces), and the correction terms contr$_\nu(P,Q)$ depend on the components of the fibers that are met by $P$ and $Q$. More specifically, let $A_\nu$ be the intersection matrix of the non-identity components of the fibers $F_\nu$, $\nu\in \mathbb{P}^1$ 
$$A_\nu=(\Theta_{\nu,i},\Theta_{\nu,j})_{1\leq i,j\leq m_{\nu}-1}$$
Then if $P$ meets $\Theta_{\nu,i}$, and $Q$ meets $\Theta_{\nu,j}$, then the local contribution term is given by 

$$\text{contr}_\nu(P,Q)= 0 \text{ if } ij=0,\text{ or }= -(A_\nu^{-1})_{i,j}\text{ if }ij\neq 0$$

We specialize $P=Q$ to get the $height$ of a single section $P$. Thus, 

$$h(P)=\langle P,P\rangle= 2\chi(X)+2P\cdot O -\sum_\nu\text{contr}_\nu(P,P)$$

The contribution terms are given in the following table. The conventions followed are the following: 
\begin{enumerate}
\item The components of the $A_n$ fibers are numbered cyclically by $\Theta_0$, $\Theta_1$,$\cdots$,$\Theta_n$, where $\Theta_0$ is the identity component. 
\item At additive fibers, only the simple components are named. 
\item The simple components of the $D_n$ fibers are numbered as $\Theta_1$ for the near component, and $\Theta_2$, $\Theta_3$ for the far components (with respect to the identity component).
\end{enumerate}

\begin{center}
\begin{tabular}{ccccl}\hline
Dynkin Diagram & $A_{n-1}$ & $E_6$ & $E_7$ & $D_{n+4}$\\
\hline \hline
$i=j$ & $\frac{i(n-i)}{n}$ & $\frac{4}{3}$ & $\frac{3}{2}$ & $1$, $i=1$\\
& & & & $1+\frac{n}{4}$, $i=2,3$\\
\hline
$i<j$ & $\frac{i(n-j)}{n}$ & $\frac{2}{3}$ & - & $\frac{1}{2}$, $i=1$\\
& & & & $\frac{1}{2}+\frac{n}{4}$, $i=2$\\
\hline
\end{tabular}
\end{center}

As described in \cite{Sh2}, we compute the determinant of the Neron-Severi using
$$\text{disc}(NS(X))=\frac{(-1)^{\text{rank}(E(k(C)))}\text{disc}(T(X))\text{disc}(MWL(X))}{|E(k(C))_{\text{tor}}|^2}$$
where the discriminant of the Mordell-Weil Lattice (the free part generated by non-torsion sections) is calculated using the height pairing between the sections. $(E(k(C)))_{\text{tor}}$ is the group of torsion sections. 
\clearpage

\section{Elliptic Fibrations for the Supersingular K3 Surface with $\sigma=1$ over char $3$}
\subsection{Introduction}
Our goal is to use  $2$-neighbour and $3$-neighbour constructions in root lattices to obtain all possible elliptic fibrations of the supersingular surface of Artin invariant $1$ over characteristic $3$. Many of the ideas underlying the work in this paper are based on the works of Tetsuji Shioda \cite{Sh1}, \cite{Sh2} Noam Elkies, Matthias Schuett and Abhinav Kumar \cite{Ku1}, \cite{Ku2}, among others. From the works of Rudakov-Shafarevich and Ichiro Shimada we know descriptions of the abstract N$\acute{\text{e}}$ron-Severi lattices for other characteristics and Artin invariants. The methods used in this paper work more generally for those cases as well, although the computations can become more cumbersome, albeit finite. Ideally one should be able to design a computer program which could compute these fibrations in a reasonable time. \\
A few definitions: 
\begin{definition}
A $Niemeier$ $lattice$ is the orthogonal complement of a copy of the hyperbolic plane $\mathcal{U}$ in the even unimodular lattice of signature $(1,25)$, II$_{(1,25)}$. There are $24$ such lattices, which have been enumerated by Hans-Volker Niemeier, and except one, all of them have root sublattices. These are all the unimodular even positive definite lattices of rank $24$. 

\end{definition}

\begin{definition}
A \textit{root} in a lattice is an element of norm $-2$.
\end{definition}

\begin{definition}
A \textit{root lattice} is one that is generated by its set of roots.
\end{definition}

The only Niemeier lattice that does not have any roots is the famous \textit{Leech lattice}. All the other Niemeiers have finite index root sublattices. The entire list of root sublattices of the Niemeier lattices is as follows 
$$A_1^{24}, A_2^{12}, A_3^8, A_4^6, A_5^4D_4, D_4^6, A_6^4, A_7^2D_5^2,$$ 
$$A_8^3, A_9^2D_6, D_6^4, E_6^4, A_{11}D_7E_6, A_{12}^2, D_8^3, A_{15}D_9,$$ 
$$A_{17}E_7, D_{10}E_7^2, D_{12}^2, A_{24}, D_{16}E_8, E_8^3, D_{24}$$

Here we prove that there is a finite list of possible elliptic fibrations on the supersingular $K3$ over a field of characteristic $3$ which we assume to be algebraically closed. We describe the singular fiber, using the Neimeier lattice description. We go on to describe the fibrations, and give Weierstrass equations for these models.

\begin{lemma}
There is a unique embedding of $A_2$ into each of the root lattices $R$, upto the action of the automorphism group
\end{lemma}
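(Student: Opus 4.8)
The plan is to reformulate the statement in terms of root systems and reduce it to a transitivity assertion for the Weyl group. Throughout I pass to the positive-definite normalization (replacing the intersection form by its negative), so that a root has norm $2$ and a base of $A_2$ is an ordered pair of roots $(\alpha,\beta)$ with $\langle\alpha,\beta\rangle=-1$; this changes neither the lattice nor $\mathrm{Aut}(R)$. Since the excerpt reserves the phrase \emph{root lattice} for the irreducible pieces $A_n,D_n,E_6,E_7,E_8$, I take $R$ to be one of these (those containing $A_2$). Fixing a base $\{e_1,e_2\}$ of $A_2$, an embedding $\iota\colon A_2\hookrightarrow R$ is recorded by the ordered pair $(\iota(e_1),\iota(e_2))=(\alpha,\beta)$ of roots of $R$ with $\langle\alpha,\beta\rangle=-1$, and two embeddings are identified under $\mathrm{Aut}(R)$ precisely when the corresponding pairs lie in one $\mathrm{Aut}(R)$-orbit. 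So the lemma is equivalent to the claim that $\mathrm{Aut}(R)$ acts transitively on ordered pairs of roots $(\alpha,\beta)$ with $\langle\alpha,\beta\rangle=-1$. (If instead one reads $R$ as a full, reducible Niemeier root sublattice, I would first observe that a norm-$2$ vector of an orthogonal direct sum of even positive-definite lattices lies in a single summand, so that $\alpha$ and $\beta$, being non-orthogonal, lie in one irreducible summand; the statement then splits over the summands, with $\mathrm{Aut}(R)$ permuting the isomorphic ones.)

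First I would pin down the first root. Because $R$ is simply laced, all its roots have the same norm, so the Weyl group $W(R)\le\mathrm{Aut}(R)$ is transitive on roots; applying a suitable $w\in W(R)$ I may assume $\alpha$ is a fixed simple root $\alpha_1$. It then remains to show that $\mathrm{Stab}_{\mathrm{Aut}(R)}(\alpha_1)$ acts transitively on the set $\Sigma:=\{\beta\in R:\ \beta^2=2,\ \langle\alpha_1,\beta\rangle=-1\}$ of admissible second roots. Here I will use the standard fact that the pointwise stabilizer of the vector $\alpha_1$ inside $W(R)$ is the reflection subgroup $W(R_0)$ generated by the reflections in the roots orthogonal to $\alpha_1$, where $R_0:=\alpha_1^{\perp}\cap R$ is the orthogonal sub-root-system.

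The crux is to prove that $W(R_0)$, enlarged by an outer automorphism in the single family where this is needed, is transitive on $\Sigma$, and I would carry this out type by type from the explicit description of $R_0$. For $A_n$ one has $R_0\cong A_{n-2}$ and $\Sigma$ splits into two $W(R_0)$-orbits; these are interchanged by the automorphism $-\mathrm{id}$ composed with a transposition, an element of $\mathrm{Aut}(A_n)\setminus W(A_n)$ fixing $\alpha_1$, which restores transitivity. For $D_n$ one has $R_0\cong A_1\times D_{n-2}$, and although $W(D_{n-2})$ alone gives two orbits on $\Sigma$, the reflection in the extra $A_1$-root merges them, so $W(R_0)$ is already transitive. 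For the exceptional lattices the orthogonal complement of a root is $R_0\cong A_5,\,D_6,\,E_7$ for $E_6,E_7,E_8$ respectively, and in each case $\Sigma$ is exactly the set of weights of a minuscule representation of $R_0$ (the $\binom{6}{3}=20$ weights of the third exterior power for $A_5$, the $32$-element half-spin for $D_6$, and the $56$ of $E_7$); since $W(R_0)$ is transitive on the weights of a minuscule representation, no outer automorphism is needed. (At the coarser level of $A_2$-sublattices rather than ordered embeddings, $W(R)$ alone is in fact transitive even for $A_n$, the two orbits on $\Sigma$ being the two bases of one common subsystem.)

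I expect the exceptional lattices $E_6,E_7,E_8$ to be the main obstacle, since they require either explicit root-coordinate computations or the identification of $\Sigma$ with a minuscule orbit, whereas the classical cases reduce to elementary permutation-and-sign arguments. A cleaner but heavier alternative would be to invoke the Borel--de Siebenthal and Dynkin classification of sub-root-systems, from which $A_2$ is seen to form a single $\mathrm{Aut}(R)$-conjugacy class in every simply-laced $R$; I would keep the elementary case analysis above as the primary argument and use this classification only as a cross-check.
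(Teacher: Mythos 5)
Your proposal is correct, but it takes a genuinely different route from the paper's proof. The paper argues at the level of Dynkin diagrams: it invokes transitivity of the Weyl group on bases, and then, given a copy of $A_2$ sitting on two adjacent nodes of a diagram, explicitly extends the remaining chain to a complete new Dynkin diagram by adjoining lowest-root combinations such as $-(a_1+a_2+\cdots+a_n)$ (the affine-node trick), so that the $A_2$ lands in a fixed standard position (first two nodes of $A_n$, head of the long tail of $D_n$, the short tail of $E_6$, and so on); transitivity on bases then gives uniqueness. You instead classify ordered pairs of roots $(\alpha,\beta)$ with $\langle\alpha,\beta\rangle=-1$: root-transitivity fixes $\alpha=\alpha_1$, the Steinberg fixed-point theorem identifies $\mathrm{Stab}_{W(R)}(\alpha_1)=W(R_0)$ with $R_0=\alpha_1^{\perp}\cap R$, and the orbits on $\Sigma$ are settled type by type, with the exceptional cases handled by recognizing $\Sigma$ as the weight set of a minuscule representation of $R_0$ --- and your numerics check out, since $|\Sigma|=(|R|-2-|R_0|)/2$ gives $20$, $32$, $56$ for $E_6$, $E_7$, $E_8$, matching $\Lambda^3$ of $A_5$, the half-spin of $D_6$, and the $56$ of $E_7$. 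Your route is in fact tighter on a point the paper glosses over: the paper's extension argument implicitly starts from an $A_2$ whose basis already consists of adjacent simple roots in some base of $R$, without explaining why an arbitrary pair of roots of inner product $-1$ is conjugate to such a pair; your stabilizer-orbit analysis supplies exactly this. You also cleanly separate what holds under $W(R)$ from what needs $\mathrm{Aut}(R)$ (two $W$-orbits of ordered pairs in type $A$, merged by $-\tau_{12}\in\mathrm{Aut}(A_n)\setminus W(A_n)$, but a single $W$-orbit of $A_2$-sublattices, which is all the ensuing complement computation uses), and you dispose of the reducible Niemeier case by the correct observation that a norm-$2$ vector of an orthogonal sum of even positive-definite lattices lies in one summand. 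What the paper's approach buys is constructive compatibility with the table that follows it --- the explicit diagram extensions directly exhibit $A_2^{\perp}$ inside each $R$ --- while yours buys a complete, checkable case analysis at the cost of heavier (though entirely standard) machinery.
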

\begin{proof}
From the general theory of Weyl group actions on root lattices, we know that the action is simply transitive. Thus, if $D$ and $D'$ are two irreducible Dynkin diagrams giving a basis for the corresponding root lattice (as described in one of the earlier sections), then there is an element $\sigma\in W(R)$ such that $\sigma(D)=D'$. Based on this, we want to prove that given any copy of $A_2$ within a root lattice, it can be extended to the complete Dynkin diagram, such that the copy of $A_2$ forms 
\begin{enumerate}
\item the first $2$ roots of the $A_n$ diagram, in case of an $A_n$ root lattice,
\item the first $2$ roots of the long tail of the $D_n$ $(n>4)$ diagram, in case of the $D_n$ root lattice, (note that any embedding of $A_2$ is equivalent in a $D_4$)
\item the $2$ roots of the short tail of the $E_6$ diagram, in case of the $E_6$ lattice,
\item the $2$ roots of the tail of length $2$ (the other $2$ tails being of length $1$ and $3$, all attached to the one single node of valency $3$), in case of the $E_7$ lattice, and 
\item the first $2$ roots of the longest tail of the $E_8$ diagram, in case of the $E_8$ lattice.
\end{enumerate}
In the case of $A_n$, let us denote the nodes by $[a_1,a_2,a_3,a_4,\cdots a_n]$. If our $A_2$ maps to $\{a_2,a_3\}$, we can extend the chain of nodes given by $[a_2,a_3,a_4,\cdots a_n]$ by the root $-(a_1+a_2+a_3+\cdots a_n)$. This clearly gives us a new $A_n$ diagram, with $\{a_2,a_3\}$ as the first $2$ roots. If $A_2$ maps to $\{a_3,a_4\}$, we can extend the chain of nodes given by $[a_3,a_4,a_5,\cdots a_n]$ by the roots $\{-(a_2+a_3+\cdots a_n),-a_1\}$. This gives us a new $A_n$ diagram, with $\{a_3,a_4\}$ as the first $2$ roots. If $A_2$ maps to $\{a_4,a_5\}$, we can extend the chain of nodes given by $[a_4,a_5,\cdots a_n]$ by the roots $\{-(a_3+\cdots a_n),-a_2,-a_1\}$. This gives us a new $A_n$ diagram, with $\{a_3,a_4\}$ as the first $2$ roots. We can do this with any embedding of $A_2$ into $A_n$. \\
In the case of $D_n$, given by $[a_1,a_2,a_3,\cdots,a_{n-1},a_n]$, where $a_n$ is connected to $a_{n-2}$ and no other root. If $A_2$ maps to $\{a_2,a_3\}$, we can extend the straight chain given by $[a_2,a_3,\cdots,a_n]$ to a $D_n$, using $-(a_1+a_2+\cdots+a_{n-2}+a_n)$ and $-(a_1+a_2+\cdots+a_{n-2}+a_{n-1})$. The other cases are very similar to this. \\
In the case of $E_6$, let us denote the diagram by $[a_1,a_2,a_3,a_4,a_5,a_6]$, where $[a_1,a_2,a_3,a_4,a_5]$ gives an $A_5$, and $a_6$ is connected to $a_3$. If our $A_2$ maps to $\{a_1,a_2\}$, we can extend the chain $[a_1,a_2,a_3,a_6]$ by the roots $-(a_1+2a_2+2a_3+a_4+a_6)$ and $-a_5$, which gives an $E_6$ with the $A_2$ now mapping to the shortest tail of an $E_6$ diagram.\\
We can do similar extensions to other embeddings of $A_2$, and also similarly in the case of the other $E_n$'s. 
\end{proof}

\begin{theorem}
There are $52$ possible elliptic fibrations on the supersingular $K3$ surface of Artin invariant $1$ over an algebraically closed field of characteristic $3$
\end{theorem}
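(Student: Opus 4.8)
The plan is to set up the Kneser--Nishiyama method, which converts the classification of elliptic fibrations into a lattice-embedding problem solvable by hand. An elliptic fibration with section on $X$ is the same datum as a primitive embedding of the hyperbolic plane $\mathcal{U}$ into $NS(X)$, and the fibration is determined up to isomorphism by the isometry class of the essential lattice $NS_{\text{ess}} = \mathcal{U}^{\perp}$ together with the marking of its root sublattice, which records the reducible fibers. Since $NS(X)$ has signature $(1,21)$ and discriminant $-9$, the lattice $NS_{\text{ess}}$ is (after the usual sign change) positive definite of rank $20$, with discriminant group $(\mathbb{Z}/3)^2$ and discriminant $9$. First I would compute the discriminant form $q$ of $NS_{\text{ess}}$ explicitly, and then fix a positive definite \emph{companion} lattice $T_0$ of rank $24-20=4$ whose discriminant form is $-q$; the natural candidate, which I would confirm has the correct form, is $T_0 = A_2 \oplus A_2$ (rank $4$, discriminant $9$, discriminant group $(\mathbb{Z}/3)^2$). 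The purpose of this choice is that, because $NS_{\text{ess}}\oplus T_0$ then has the discriminant form of a unimodular lattice, every realizable $NS_{\text{ess}}$ arises as the orthogonal complement $T_0^{\perp}$ inside some rank-$24$ even unimodular positive definite lattice, that is, inside one of the $24$ Niemeier lattices.

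Second, I would enumerate the primitive embeddings of $T_0 = A_2\oplus A_2$ into each Niemeier lattice $N$, up to the action of $O(N)$. For this it suffices to embed the two $A_2$ factors into the root sublattice $R(N)$, whose possibilities are the $23$ root systems listed above (the rootless Leech lattice admits no such embedding and is discarded). Here Lemma~1 is the essential tool: it asserts that $A_2$ embeds uniquely, up to automorphism, into each irreducible root lattice, and pins down the exact position of the embedded copy inside each Dynkin diagram. Using this, placing the two $A_2$ factors into the irreducible components of $R(N)$ --- either two copies in one component, or one in each of two components --- is a finite, controlled combinatorial task. For each placement I would compute $T_0^{\perp}$ in $N$; its root sublattice is the configuration of reducible fibers of the associated fibration, and $T_0^{\perp}$ itself is the essential lattice, from which the Mordell--Weil lattice is recovered as $NS_{\text{ess}}/R$ and its discriminant checked against $\text{disc}(NS(X)) = \pm\,\text{disc}(T)\,\text{disc}(MWL)/|E(k(C))_{\text{tor}}|^2$.

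Finally, I would assemble the list and remove redundancies: distinct embeddings, and embeddings into distinct Niemeier lattices, can yield isometric essential lattices with the same fiber configuration and hence the same fibration, so each candidate frame must be recorded only once. Counting the surviving isometry classes of $(NS_{\text{ess}}, R)$ should give exactly $52$. Completeness --- that these are \emph{all} fibrations --- follows because $NS(X)$ is unique up to isometry (Ogus) and the Nishiyama correspondence is a bijection between fibration frames and such complements, so no fibration is missed. I expect the main obstacle to be precisely this last bookkeeping: carrying out the embedding enumeration into all $23$ Niemeier root systems without omission, computing each orthogonal complement correctly, and then deciding exactly which frames coincide so as to land on $52$ rather than an over- or undercount. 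Lemma~1 is what keeps the enumeration finite and explicit, but the sheer volume of cases, together with the identification of duplicates, is where errors are most likely to enter.
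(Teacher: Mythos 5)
Your proposal is correct and follows essentially the same route as the paper: the paper also reduces the problem, via Nikulin's discriminant-form theorem, to enumerating embeddings of $A_2^2$ into the Niemeier lattices (phrased there as $NS(X)^{\perp}\hookrightarrow \mathcal{U}^{\perp}$ inside $II_{(1,25)}$, which is the same companion-lattice computation you describe), and it likewise relies on the lemma that $A_2$ embeds uniquely up to automorphism into each irreducible root lattice before tabulating the orthogonal complements to arrive at the count of $52$. Your closing caveat is apt, since the paper itself delegates the deduplicated table of complements to a calculation of Elkies and Schuett rather than spelling out the bookkeeping.
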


\begin{remark}
Although we assume the ground field to be algebraically closed, in our calculations we have to go only upto $\mathbb{F}_9$. 
\end{remark}

\begin{proof}
Let $X/k$ be the supersingular $K3$ of Artin invariant $1$, $k$ is algebraically closed field of characteristic $3$. The N$\acute{\text{e}}$ron-Severi lattice of $X$, $NS(X) (=N)$ is an invariant of $X$. From the classification of even non-degenerate lattices, we know that $NS(X)\cong \mathcal{U}\oplus A_2^2\oplus E_8^2$, where $\mathcal{U}$ is the hyperbolic plane. This is a $22$ dimensional negative definite lattice of signature $(1,21)$ and discriminant $-3^2$. Using well-known results from lattice theory, we know that $N$ can be embedded into the even unimodular lattice of signature $(1,25)$, II$_{(1,25)}$. Now, as we described before, the choice of the class of a fiber and a zero section on $X$ determines a copy of the hyperbolic plane $\mathcal{U}$ in $NS(X)$, and vice-versa. Describing embeddings $\mathcal{U}\hookrightarrow NS(X)$ is equivalent to describing the embeddings $NS(X)^{\perp}\hookrightarrow \mathcal{U}^\perp$, where the orthogonal complements are taken in the ambient $II_{(1,25)}$. We know that the $\mathcal{U}^\perp$'s are by definition the Niemeier lattices. Now, according to a result due to Nikulin, let $M\hookrightarrow L$ be a primitive embedding of non-degenerate even lattices, and suppose that $L$ is unimodular. Then the discriminant forms $q_{M^\perp}\cong -q_M$. Conversely if $M_1$ and $M_2$ are non-degenrate even lattices which satisfy $q_{M_1}\cong -q_{M_2}$, then there is a primitive embedding of $M_1\hookrightarrow L$ such that $M_1^\perp\cong M_2$. In our case, let $M=\mathcal{U}\oplus A_2^2\oplus E_8^2$. Thus, the discriminant of $M$ is $-9$. Using the above result from Nikulin \cite{N1},\cite{N2} the discriminant of $M^\perp$ is $9$, and it is of rank $4$. Thus, $M^\perp\cong A_2^2$, using to the classification of lattices. We thus need to figure out the number of embeddings of $A_2^2$ in each of the Niemeier lattices, upto isomorphism. \\
Now, using the previous lemma, we can describe the orthogonal complement of $A_2$ in each of the root systems, and successively for the second copy of $A_2$. The uniqueness of these embeddings follows from the preceding lemma. The list of the orthogonal complements are given in the table below. 
\end{proof}
The following table gives the orthogonal complement of $A_2^2$ in each of the different Niemeier lattices. This calculation is due to Noam Elkies and Matthias Schuett.
\begin{center}
\begin{tabular}{|c|c|c|}\hline
Root System $D$ & $A_2^\perp \subset D$ & $(A_2^2)^\perp \subset D$\\
\hline\hline
$A_2$ & $0$ & $0$\\
\hline
$A_3$ & $0$ & $0$\\
\hline
$A_n$  for $n>3$ & $A_{n-3}$ & $A_{n-6}$ for $n>6$ ($0$ for $A_5,A_6$)\\
\hline
$D_4$ & $0$ & $0$\\
\hline
$D_5$ & $A_1^2$ & $0$\\
\hline
$D_6$ & $A_3$ & $0$\\
\hline
$D_n$ for $n>6$ & $D_{n-3}$ & $D_{n-6}$ for $n>9$ ($0$ for $D_7$, $A_1^2$ for $D_8$, $A_3$ for $D_9$)\\
\hline
$E_6$ & $A_2^2$ & $A_2$\\
\hline
$E_7$ & $A_5$ & $A_2$\\
\hline
$E_8$ & $E_6$ & $A_2^2$\\
\hline
\end{tabular}
\end{center}
Thus the lattices that can contain $A_2^2$ are $A_n (n>4), D_n (n>5)$, and all of the $E_n$'s.  \\
Let $N$ be a Niemeier lattice, and $N'$ the orthogonal complement of $A_2^2$ in $N$. Following is the list of $\text{Roots}(N)$ and $\text{Roots}(N')$. Borrowing notation from Noam Elkies, $m:$ stands for the lattice obtained by extracting $A_2^2$ from the $m$-th factor of $\text{Roots}(N)$, and $mn:$ for the lattice obtained by extracting $A_2$'s from the $m$-th and $n$-th factor.  

\begin{center}
\begin{tabular}{|c|c|c||c|c|c|}\hline
$\text{Roots}(N)$ & $\text{Roots}(N')$ & rk$(MW)$ & $\text{Roots}(N)$ & $\text{Roots}(N')$ & rk$(MW)$\\
\hline \hline
   $A_2^{12}$  &    $ A_2^{10}$&             $0$ & $A_{12}^2$    &  $1: A_6 A_{12}$          & $2$\\
\hline  
   $A_3^8$      &   $A_3^6$     &            $2$ &   &  $11: A_9^2$              & $2$\\

\hline
   $A_4^6$       &  $A_1^2 A_4^4$&           $2$ &  $D_8^3$       & $1: A_1^2 D_8^2$          & $2$\\
\hline
   $D_4^6$        & $D_4^4$       &          $4$ &     & $11: D_5^2 D_8$           & $2$\\
\hline
   $D_4A_5^4$     & $12: A_2 A_5^3$ &        $3$ & $D_9 A_{15}$  & $1: A_3 A_{15}$  & $2$\\
\hline      
            & $2: D_4 A_5^3$   &       $1$ &          & $2: A_9 D_9$               &$2$\\
\hline      
            & $22: A_2^2 D_4 A_5^2$&   $2$&
           & $12: D_6 A_{12}$           & $2$\\
\hline
   $A_6^4$       &  $1: A_6^3$            &  $2$&$E_7 A_{17}$     & $1: A_2 A_{17}$           & $1$\\
\hline      
          &  $11: A_3^2 A_6^2$       & $2$&
           & $2: E_7 A_{11}$            &$2$\\
\hline  
   $D_5^2 A_7^2$  & $11: A_1^4 A_7^2$       & $2$&  &  $12: A_5 A_{14}$           & $1$\\
\hline      
           &  $2: A_1 D_5^2 A_7$       &$2$&$E_7^2 D_{10}$    & $1: A_2 E_7 D_{10}$        & $1$\\
\hline      
           & $12: A_1^2 A_4 D_5 A_7$   &$2$& & $11: A_5^2 D_{10}$          &$0$\\
\hline          
            &$22: A_4^2 D_5^2$         &$2$& & $2: D_4 E_7^2$              &$2$\\
\hline
   $A_8^3$        & $1: A_2 A_8^2$        &   $2$& & $12: A_5 D_7 E_7$           &$1$\\
\hline    
           & $11: A_5^2 A_8$        &   $2$& $D_{12}^2$     &  $1: D_6 D_{12}$         &    $2$\\
\hline 
   $D_6^4$       & $1: D_6^3$              &  $2$&  & $11: D_9^2$              &   $2$\\
\hline     
           & $11: A_3^2 D_6^2$       &  $2$&        
  $E_8^3$         &$1: A_2^2 E_8^2$         &    $0$\\
\hline
   $D_6 A_9^2$   &  $1: A_9^2$              & $2$&
            &$11: E_6^2 E_8$          &    $0$\\
\hline     
           & $2: A_3 D_6 A_9$         & $2$&$ E_8 D_{16}$    &  $1: A_2^2 D_{16}$        &    $0$\\
\hline      
           & $12: A_3 A_6 A_9$        & $2$& &   $2: E_8 D_{10}$          &   $2$\\
\hline     
           & $22: A_6^2 D_6$          & $2$& &    $12: E_6 D_{13}$         &   $1$\\
\hline
   $E_6^4$       & $1: A_2 E_6^3$           & $0$& $A_{24}$   &       $A_{18}$                &  $2$\\
\hline     
           & $11: A_2^4 E_6^2$        & $0$& $D_{24}$  &        $D_{18}$                &  $2$\\
\hline
 $E_6 D_7 A_{11}$ & $1: A_2 D_7 A_{11}$      & $0$&&&\\
\hline      
           &  $2: E_6 A_{11}$          & $3$ &&&\\
\hline      
           & $12: A_2^2 D_4 A_{11}$    & $1$&&&\\
\hline      
           & $3: E_6 D_7 A_8$          &$2$&&&\\
\hline      
           & $13: A_2^2 D_7 A_8$       &$1$&&&\\
\hline      
           & $23: D_4 E_6 A_8$         &$2$&&&\\
\hline
       
\end{tabular}
\end{center}
Now we begin writing down the elliptic models. 

\section{Fibration 1 : $A_{11},A_2, D_7$}
Weierstrass equation: $y^2=x^3+2(t^3+1)x^2+t^6x$\\ 
This fibration corresponds to the fiber type of $A_{11}A_2D_7$. In the diagram below, we have the $A_{11}$ fiber at $t=0$, the $A_2$ fiber at $t=1$, and the $D_7$ fiber at $t=\infty$, given by the nodes $a_i$'s, $b_j$'s  and $c_k$'s respectively. The zero section is given by the node $O$, and the torsion section by $T$. These nodes represent the roots of $NS(X)$, which correspond smooth rational curves of self-intersection $-2$. 

\begin{center}
\setlength{\unitlength}{1cm}
\begin{picture}(10.0,8)(0.0,0.0)
\filltype{white}

\path(1.0,1.0)(1.5,2.0)
\path(1.0,1.0)(0.5,2.0)
\path(1.5,2.0)(1.5,3.0)
\path(1.5,3.0)(1.5,4.0)
\path(1.5,4.0)(1.5,5.0)
\path(1.5,5.0)(1.5,6.0)
\path(1.5,6.0)(1.0,7.0)
\path(0.5,2.0)(0.5,3.0)
\path(0.5,3.0)(0.5,4.0)
\path(0.5,4.0)(0.5,5.0)
\path(0.5,5.0)(0.5,6.0)
\path(0.5,6.0)(1.0,7.0)

\path(3.0,3.0)(3.5,4.0)
\path(3.0,3.0)(2.5,4.0)
\path(2.5,4.0)(3.5,4.0)

\path(4.5,1.0)(5.0,2.0)
\path(5.5,1.0)(5.0,2.0)
\path(5.0,2.0)(5.0,3.0)
\path(5.0,3.0)(5.0,4.0)
\path(5.0,4.0)(5.0,5.0)
\path(5.0,5.0)(4.5,6.0)
\path(5.0,5.0)(5.5,6.0)

\path(3.0,7.5)(1.0,7.0)
\path(3.0,7.5)(3.5,4.0)
\path(3.0,7.5)(5.5,1.0)

\path(3.0,0.5)(1.0,1.0)
\path(3.0,0.5)(3.0,3.0)
\path(3.0,0.5)(4.5,1.0)

\put(1.0,1.0){\circle*{0.2}}
\put(1.5,2.0){\circle*{0.2}}
\put(1.5,3.0){\circle*{0.2}}
\put(1.5,4.0){\circle*{0.2}}
\put(1.5,5.0){\circle*{0.2}}
\put(1.5,6.0){\circle*{0.2}}
\put(1.0,7.0){\circle*{0.2}}
\put(0.5,2.0){\circle*{0.2}}
\put(0.5,3.0){\circle*{0.2}}
\put(0.5,4.0){\circle*{0.2}}
\put(0.5,5.0){\circle*{0.2}}
\put(0.5,6.0){\circle*{0.2}}

\put(3.0,3.0){\circle*{0.2}}
\put(3.5,4.0){\circle*{0.2}}
\put(2.5,4.0){\circle*{0.2}}

\put(4.5,1.0){\circle*{0.2}}
\put(5.5,1.0){\circle*{0.2}}
\put(5.0,2.0){\circle*{0.2}}
\put(5.0,3.0){\circle*{0.2}}
\put(5.0,4.0){\circle*{0.2}}
\put(5.0,5.0){\circle*{0.2}}
\put(4.5,6.0){\circle*{0.2}}
\put(5.5,6.0){\circle*{0.2}}

\put(3.0,0.5){\circle*{0.2}}
\put(3.0,7.5){\circle*{0.2}}

\put(1.0,0.75){$a_0$}
\put(1.75,2.0){$a_1$}
\put(1.75,3.0){$a_2$}
\put(1.75,4.0){$a_3$}
\put(1.75,5.0){$a_4$}
\put(1.75,6.0){$a_5$}
\put(1.0,7.25){$a_6$}
\put(0.0,2.0){$a_{11}$}
\put(0.0,3.0){$a_{10}$}
\put(0.0,4.0){$a_9$}
\put(0.0,5.0){$a_8$}
\put(0.0,6.0){$a_7$}

\put(3.25,3.0){$b_0$}
\put(3.75,4.0){$b_1$}
\put(2.5,4.25){$b_2$}

\put(4.5,0.75){$c_0$}
\put(5.5,0.75){$c_1$}
\put(5.25,2.0){$c_2$}
\put(5.25,3.0){$c_3$}
\put(5.25,4.0){$c_4$}
\put(5.25,5.0){$c_5$}
\put(4.5,6.25){$c_6$}
\put(5.5,6.25){$c_6$}

\put(3.0,0.0){$O$}
\put(3.0,7.75){$T$}
\end{picture}
\end{center}
The trivial lattice (that is, the sub-lattice of $NS(X)$ spanned by the classes of the zero section and the irreducible components of the fibers) has rank $2+11+2+7=22$. Thus the $MW$-rank is $0$. We have a $4$-torsion section, given by $(t^3,0)$. The discriminant of the trivial lattice is $12\cdot3\cdot 4=144$. Thus, the trivial lattice together with the $4$-torsion section has signature $(1,21)$ and absolute discriminant $3^2$, and is thus equal to the full $NS(X)$. Below are the positions of the reducible fibers, and some of the sections.

\begin{center}
\begin{tabular}{|l|c|}\hline
Position & Kodaira-N$\acute{\text{e}}$ron type\\
\hline \hline
$t=0$ & $A_{11}$\\
\hline
$t=1$ & $A_2$\\
\hline
$t=\infty$ & $D_7$\\
\hline
\end{tabular}
\end{center} 

\begin{center}
\begin{tabular}{|l|c|}\hline
section type & equation\\
\hline \hline
$4$-torsion & $(t^3,0)$\\
\hline
\end{tabular}
\end{center}

\section{Fibration 2 : $D_4,D_4,D_4,D_4$}
Weierstrass equation: $y^2=x^3-t^2(t-1)^2(t+1)^2x$\\
This is the Kummer surface associated to the product of the supersingular elliptic curve $y^2=x^3-x$ with itself, over characteristic $3$. This fibration corresponds to the fiber type of $D_4^4$, with $MW$-rank $4$, and full $2$-torsion sections. In the diagram below, we see the $D_4$ fibers at $t=0$,$1$,$-1$ and $\infty$,given by the nodes $R_{ij}$, $(1\leq i\leq 4, 0\leq j\leq 4)$, the zero section $O$, and one of the non-torsion sections $U$. The nodes represent the roots in the N$\acute{\text{e}}$ron-Severi lattice, which correspond to smooth rational curves of self-intersection $-2$. 

\begin{center}
\setlength{\unitlength}{1cm}
\begin{picture}(12.5,8)(0.0,0.0)
\filltype{white}

\path(1.0,4.0)(0.5,3.0)
\path(1.0,4.0)(0.5,5.0)
\path(1.0,4.0)(1.5,3.0) 
\path(1.0,4.0)(1.5,5.0)

\path(4.5,4.0)(4.0,3.0)
\path(4.5,4.0)(4.0,5.0)
\path(4.5,4.0)(5.0,3.0) 
\path(4.5,4.0)(5.0,5.0)

\path(8.0,4.0)(7.5,3.0)
\path(8.0,4.0)(7.5,5.0)
\path(8.0,4.0)(8.5,3.0) 
\path(8.0,4.0)(8.5,5.0)

\path(11.5,4.0)(11.0,3.0)
\path(11.5,4.0)(11.0,5.0)
\path(11.5,4.0)(12.0,3.0)
\path(11.5,4.0)(12.0,5.0)

\path(6.0,0.0)(1.5,3.0)
\path(6.0,0.0)(5.0,3.0)
\path(6.0,0.0)(8.5,3.0)
\path(6.0,0.0)(12.0,3.0)

\path(6.0,7.0)(1.5,5.0)
\path(6.0,7.0)(5.0,5.0)
\path(6.0,7.0)(8.5,5.0)
\path(6.0,7.0)(12.0,5.0)

\put(1.0,4.0){\circle*{0.2}}
\put(0.5,3.0){\circle*{0.2}}
\put(1.5,3.0){\circle*{0.2}}
\put(0.5,5.0){\circle*{0.2}}
\put(1.5,5.0){\circle*{0.2}}

\put(4.5,4.0){\circle*{0.2}}
\put(4.0,3.0){\circle*{0.2}}
\put(5.0,3.0){\circle*{0.2}}
\put(4.0,5.0){\circle*{0.2}}
\put(5.0,5.0){\circle*{0.2}}

\put(8.0,4.0){\circle*{0.2}}
\put(7.5,3.0){\circle*{0.2}}
\put(8.5,3.0){\circle*{0.2}}
\put(7.5,5.0){\circle*{0.2}}
\put(8.5,5.0){\circle*{0.2}}

\put(11.5,4.0){\circle*{0.2}}
\put(11.0,3.0){\circle*{0.2}}
\put(12.0,3.0){\circle*{0.2}}
\put(11.0,5.0){\circle*{0.2}}
\put(12.0,5.0){\circle*{0.2}}

\put(1.25,2.5){$R_{14}$}
\put(1.25,5.25){$R_{13}$}
\put(0.25,2.5){$R_{12}$}
\put(0.25,5.25){$R_{11}$}
\put(0.15,3.95){$R_{10}$}

\put(4.75,2.5){$R_{24}$}
\put(4.75,5.25){$R_{23}$}
\put(3.75,2.5){$R_{22}$}
\put(3.75,5.25){$R_{21}$}
\put(3.75,3.95){$R_{20}$}

\put(8.25,2.5){$R_{34}$}
\put(8.25,5.25){$R_{33}$}
\put(7.25,2.5){$R_{32}$}
\put(7.25,5.25){$R_{31}$}
\put(7.25,3.95){$R_{30}$}

\put(10.75,2.5){$R_{41}$}
\put(10.75,5.25){$R_{42}$}
\put(11.75,2.5){$R_{43}$}
\put(11.75,5.25){$R_{44}$}
\put(11.75,3.95){$R_{40}$}

\put(6.0,0.0){\circle*{0.2}}
\put(6.0,7.0){\circle*{0.2}}
\put(6.0,0.25){$O$}
\put(6.25,7.25){$U$}

\end{picture}
\end{center}
The trivial lattice (that is, the sub-lattice of $NS(X)$ spanned by the classes of the zero section and the irreducible components of the fibers) has rank $2+4\cdot 4=18$, and discriminant $-16$. The heights of the $4$ non-torsion sections are $2$,$1$,$1$ and $2$. We see that the sublattice generated by the trivial lattice, the torsion sections and the non-torsion sections has signature $(1,21)$ and absolute discriminant $3^2$. Consequently it must be all of $NS(X)$, which we know to be of the same rank and discriminant. 

\begin{center}
\begin{tabular}{|l|c|}\hline
Position & Kodaira-N$\acute{\text{e}}$ron type\\
\hline \hline
$t=0$ & $D_4$\\
\hline
$t=1$ & $D_4$\\
\hline
$t=-1$ & $D_4$\\
\hline
$t=\infty$ & $D_4$\\
\hline
\end{tabular}
\end{center} 

\begin{center}
\begin{tabular}{|l|c|}\hline
section type & equation\\
\hline \hline
$2$-torsion & $(0,0)$\\
\hline
& $(t(t-1)(t+1),0)$\\
\hline
& $(-t(t-1)(t+1),0)$\\
\hline
\end{tabular}
\end{center}

\section{Fibration 3 : $A_6,A_6,A_6$}
Weierstrass equation: $y^2=x^3+(t^4-t+1)x^2+t^2(t-1)(1+t-t^2)x+t^4(t-1)^2$\\
The original model over $\mathbb{Q}$ was given by Tate as an example of a singular $K3$ surface over $\mathbb{Q}$, that is one with rank $20$. The reduction of Tate's equation in characteristic $3$ gives the above equation, and has fiber configuration $$A_6,A_6,A_6$$
Since it also has a non-torsion section, $(0,t^2(t-1))$, thus the rank of the surface over characteristic $3$ is at least $21$. It is known that any such surface is actually of rank $22$, and is hence supersingular. 

Fiber configuration 
$$A_6,A_6,A_6$$ 
\section{Fibration 4 : $A_1,A_7,D_5,D_5$}

To obtain the Weierstrass equation for this fibration, we use a $2$-neighbor step from Fibration $1$. We compute the explicitly the space of sections of the line bundle $\mathcal{O}(F)$, where $F=2O+2a_0+a_1+a_{11}+b_0+c_0$ is the class of the $D_5$ fiber we are considering. The space of the sections is $2$ dimensional and the ratio of two linearly independent global sections will be an elliptic parameter for $X$, for which the class of the fiber will be $F$. Any global section has a pole of order at most $2$ along $O$, the zero section of Fibration $1$. Also it has at most a double pole along $a_0$, which is the identity component of the $t=0$ fiber of Fibration $1$, a simple pole along each of $a_1$ and $a_{11}$ and along $b_0$ and $c_0$, the identity components of the fibers at $t=1$ and $t=\infty$ respectively. We deduce that a global section must have the form 
$$w=\frac{a_0+a_1t+a_2t^2+a_3t^3+a_4t^4+bx}{t^2(t-1)}$$
Since $1$ is a global section, we can subtract multiples of $t^2(t-1)$ from the numerator, and assume that $a_3=0$. With this constraint, we obtain a $1$-dimensional space of sections, the generator of which will give us the parameter for the base of the Fibration $4$. To ensure a unique choice of the generator, we scale $x$ such that $b=1$. \\
The next step is to obtain further conditions on the $a_i$'s by looking at the order of vanishing of the section at various non-identity components (as prescribed above). For this purpose, we employ successive blow-ups at the singular points, and follow Tate's algorithm to obtain conditions. For example, at $t=0$, the first blow-up maps $x\mapsto tx$ and $y\mapsto ty$, which ensures $a_0=0$ for $w$ to have a pole of order at most $1$ at the near leaf of the $A_{11}$ fiber. Similarly, the second blow-up maps $x\mapsto t^2x$, which means $a_1=0$. At $t=\infty$, we change coordinates,and define $u=\frac{1}{t}$, such that $u=0$. Thus, the original equation becomes 
$$y^2=x^3+2(u+u^4)x^2+u^2x$$ and the section becomes 
$$w=\frac{a_2u^2+a_4+xu^3}{u(1-u)}$$
Again, blowing-up once gives $a_4=0$. At $t=1$, we again change coordinates, setting $u=t-1$, such that $u=0$. Now, there is one extra step that needs to be taken care of. The singularity of the curve at $t=1$ is at $(1,0)$. Hence, we map $x\mapsto 1+x$ to map shift the singularity to $(0,0)$. Thus, the original equation changes to 
$$y^2=x^3+2((u+1)^3+1)x^2+(u+1)^6x$$ and the section changes to 
$$w=\frac{x+a_2t^2}{t^2(t-1)}=\frac{x+a_2(u+1)^2}{(u+1)^2u}$$
The first blow-up maps $x\mapsto 1+ux$, giving $a_2=-1$. Thus $w=\frac{x-t^2}{t^2(t-1)}$, which means
$$x=wt^2(t-1)+t^2$$

We now replace $x$ by the above in the right side of the original equation, and simplify by absorbing square factors from the right into the $y^2$ term on the left. We get a quartic in $t$, with coefficients in $\mathbb{F}_3(w)$, which can be converted into the following Weierstrass form, using the recipe described in the Appendix. 

$$y^2=x^3+(t^3+t-1)x^2-t(t-1)(t+1)^2x+t^2(t-1)^2(t+1)^4$$

\begin{center}
\begin{tabular}{|l|c|}\hline
Position & Kodaira-N$\acute{\text{e}}$ron type\\
\hline \hline
$t=0$ & $A_{1}$\\
\hline
$t=1$ & $A_7$\\
\hline
$t=-1$ & $D_5$\\
\hline
$t=\infty$ & $D_5$\\
\hline
\end{tabular}
\end{center} 

This fibration has $MW$-rank of $2$. 

\begin{center}
\begin{tabular}{|l|c|}\hline
section type & equation\\
\hline \hline
non-torsion & $(t(t-1)(t+1)^2,0)$\\
\hline
\end{tabular}
\end{center}

\section{Fibration 5: $E_7,A_2,D_{10}$}
We use a $2$-neighbor step from Fibration $1$, and explicitly compute the $2$-dimensional space of sections of $\mathcal{O}(F)$, where $F = a_1+a_2+2a_0+2O+2c_0+2c_2+2c_3+2c_4+2c_5+c_6+c_7$ is the new $D_{10}$ fiber considered. The new section is $$w=\frac{a_0+a_1t+a_3t^3+a_4t^4+x}{t^2}$$
By blowing up at $t=0$, we get $a_0=a_1=0$. Similarly, subsequent blow-ups at $t=\infty$ give $a_3=a_4=0$. Thus, $$w=\frac{x}{t^2}$$ Substituting $x=wt^2$ in the original equation, dividing by suitable factors and converting the resulting quartic into the Weierstrass form, we get 
$$y^2=x^3-t^3x^2+t^3x$$

\begin{center}
\begin{tabular}{|l|c|}\hline
Position & Kodaira-N$\acute{\text{e}}$ron type\\
\hline \hline
$t=0$ & $E_{7}$\\
\hline
$t=1$ & $A_2$\\
\hline
$t=-1$ & $D_{10}$\\
\hline
\end{tabular}
\end{center} 

The trivial lattice is of rank $2+7+2+10=21$. Thus, the $MW$ rank is $1$. We also have a $2$-torsion section given by $(0,0)$. The only non-torsion section is given by $(1,1)$. The height pairing formula requires that the height of the non-torsion section is $5/2$, and it intersects the $A_2$ fiber at the identity component, and the $D_{10}$ fiber at the near leaf.

\begin{center}
\begin{tabular}{|l|c|}\hline
section type & equation\\
\hline \hline
$2$-torsion & $(0,0)$\\
\hline
non-torsion & $(1,1)$\\
\hline
\end{tabular}
\end{center}

\section{Fibration 6: $D_6,D_6,D_6$}

We use a $2$-neighbor construction from Fibration $5$ and explicitly compute the $2$-dimensional space of sections of $\mathcal{O}(F)$. Here $F=a_1 +a_3+2a_0+2O+b_0+c_0$, where $a_0$ is the identity component of the $D_{10}$ fiber, and $a_1$ the near leaf, $b_0$ is the identity component of the $A_2$ fiber, and $c_0$ that of the $E_7$ fiber. We reparametrized the original equation to get the $D_{10}$ fiber at $t=0$, the $A_2$ at $t=1$ and the $E_7$ at $t=\infty$. The original equation now becomes $$y^2=x^3-tx^2+t^5x$$
And the new section is $$w=\frac{a_0+a_1t+a_2t^2+a_4t^4+x}{t^2(t-1)}$$
Consecutive blow-ups at $t=0$ give $a_0=a_1=0$. Similar calculations at $t=\infty$ give $a_4=0$, and similarly the conditions on the degree of poles of the section at $t=1$ demands that $a_2=1$. Thus, $$w=\frac{x+t^2}{t^2(t-1)}$$ which means
$$x=wt^2(t-1)-t^2$$
Replacing $x$ by the above, and simplifying, we get the required Weierstrass equation
$$y^2=x^3+t(1-t^2)x^2-t^2(t^3-1)x$$

\begin{center}
\begin{tabular}{|l|c|}\hline
Position & Kodaira-N$\acute{\text{e}}$ron type\\
\hline \hline
$t=0$ & $D_{6}$\\
\hline
$t=1$ & $D_6$\\
\hline
$t=\infty$ & $D_{6}$\\
\hline
\end{tabular}
\end{center} 

The $MW$-rank of this surface is $2$, and it has full $2$-torsion, as given by the equations below.

\begin{center}
\begin{tabular}{|l|c|}\hline
section type & equation\\
\hline \hline
$2$-torsion & $(0,0)$\\
\hline
 & $(t^3+t^2+t,0)$\\
\hline
 & $(t-t^2,0)$\\
\hline
\end{tabular}
\end{center}

\section{Fibration 7 : $D_8, A_1, A_1, D_8$}
We use a $2$-neighbor step from Fibration $6$. We explicitly compute the $2$-dimensional space of sections of the line bundle $\mathcal{O}(F)$, where $F=a_1+a_3+2a_2+2a_0+2O+2b_0+2b_2+b_1+b_3$ is the new fiber we consider. Here, $a_0$ and $b_0$ are the identity components of the $D_6$ fibers at $t=0$ and $t=1$ respectively. Whereas $a_1$ and $b_1$ are the near components, and $a_2,a_3,b_2,b_3$ are the double components.\\ 
The original equation is given by $$y^2=x^3+t(1-t^2)x^2-t^2(t^3-1)x$$and the new section by $\frac{a_0+a_1t+a_2t^2+a_3t^3+x}{t^2(t+1)^2}$. Blow-ups at $t=0$ force $a_0=a_1=0$. Blow-ups at $t=1$ give $a_2=a_3=-1$. Thus, $$w=\frac{x-t^2-t^3}{t^2(t+1)^2}$$. Substituting $$x=wt^2(t+1)^2+t^2+t^3$$ in the original equation, and simplifying as usual, we get $$y^2=x^3+t(t^2+1)x^2-t^3(t+1)^2x+t^5(t+1)^2$$ This can further be simplified to 
$$y^2=x^3+(t^3+t)x^2+t^4x$$

\begin{center}
\begin{tabular}{|l|c|}\hline
Position & Kodaira-N$\acute{\text{e}}$ron type\\
\hline \hline
$t=0$ & $D_{8}$\\
\hline
$t=1$ & $A_1$\\
\hline
$t=-1$ & $A_1$\\
\hline
$t=\infty$ & $D_{8}$\\
\hline
\end{tabular}
\end{center} 

The $MW$-rank of this surface is $2$, and it has full $2$-torsion, given by the following equations.

\begin{center}
\begin{tabular}{|l|c|}\hline
section type & equation\\
\hline \hline
$2$-torsion & $(0,0)$\\
\hline
 & $(-t,0)$\\
\hline
 & $(-t^3,0)$\\
\hline
\end{tabular}
\end{center}

\section{Fibration 8 : $E_6, D_7, A_5$}
We use a $2$-neighbor construction from Fibration $4$. We compute the space of sections of the line bundle $\mathcal{O}(F)$, where $F=2O+3a_0+2a_1+2a_7+a_2+a_6$ is the new $E_6$ fiber we are considering. Here $a_0$ is the identity component of the $A_7$ fiber at $t=0$, $a_1$ and $a_7$ are the near leaves, and $a_2$ and $a_6$ are the components intersecting the near leaves. The original equation is $$y^2=x^3- (t-1)(t^2+t-1)x^2+(t-1)^2(t+1)^4x+t^2(t+1)^2(t-1)^4$$
The new section is given by $$w=\frac{a_0+a_1t+a_2t^2+a_3t^3+x}{t^3(t-1)}$$
At $t=0$, the singularity is at $x=-1$. Hence, we translate $x \mapsto x-1$ and blow-up to get $a_0=1$. Replacing $x$ by $tx$ in the original equation, we are required to translate $x \mapsto x-t$. The second blow-up maps $x\mapsto t^2x$, and forces $a_1=1$ to give a pole of order $2$. Similarly, after translating $x\mapsto x-t^2$, the third blow-up gives $a_2=1$. Thus, $$w=\frac{1+t+t^2+a_3t^3+x}{t^3(t-1)}$$
At $t=1$, we define $u=t-1$, so that $$w=\frac{1+(u+1)+(u+1)^2+a_3(u+1)^3+x}{u(u+1)^3}$$ Since the singularity is at $x=0$, the first blow-up $x\mapsto ux$, to give $a_3=0$. Thus, $$w=\frac{x+t^2+t+1}{t^3(t-1)}$$ whence $$x=wt^3(t-1)-t^2-t-1$$ Simplifying, as usual, $$y^2=x^3+tx^2-t^3(t+1)^2x+t^5(t+1)^4$$
(It further simplifies to $x^3-tx^2+t^{12}-t^9+t^6$).

\begin{center}
\begin{tabular}{|l|c|}\hline
Position & Kodaira-N$\acute{\text{e}}$ron type\\
\hline \hline
$t=0$ & $D_{7}$\\
\hline
$t=-1$ & $A_5$\\
\hline
$t=\infty$ & $E_{6}$\\
\hline
\end{tabular}
\end{center} 

 This fibration has $MW$-rank $2$. The non-torsion sections are given below.

\begin{center}
\begin{tabular}{|l|c|}\hline
section type & equation\\
\hline \hline
non-torsion & $(0,t^3(t^3+1))$\\
\hline
\end{tabular}
\end{center}

\section{Fibration 9 : $A_3,A_9,D_6$}
We use a $2$-neighbor construction on Fibration $1$. The original equation is $$y^2=x^3- (t^3+1)x^2+t^6x$$
The new $D_6$ fiber is given by $F= 2O+a_0+b_0+2c_0+2c_1+c_2+c_3$, where the $a_i$'s are the nodes from the $A_{11}$ fiber, the $b_j$'s from the $A_2$ fiber, and $c_k$'s are from $D_7$. The new section is given by $\frac{a_0+a_1t+a_3t^3+a_4t^4+x}{t(t-1)}$ \\
At $t=0$, we blow-up once, sending $x\mapsto tx$, and get $a_0=0$. For $t=\infty$, we replace $u=\frac{1}{t}$, to get $$w=\frac{a_1u^3+a_3u+a_4+x}{u^2(1-u)}$$ The first blow-up, sending $x\mapsto ux$, gives a pole of order $2$ automatically. For the second blow-up, we need to translate $x\mapsto x-u$, to shift the singularity to $(0,0)$. The second blow-up thus sends $x\mapsto u^2x-u$, and we get $a_4=0$. The third blow-up doesn't need a translation, and sends $x\mapsto u^3x-u$, to yield $a_3=1$. A similar calculation at $t=1$ shows that $a_1=a_3$. Thus, $$w=\frac{x+t^3}{t(t-1)}$$ whence $$x=wt(t-1)-t^3-t$$ Substituting the above in the original equation, and simplifying as usual, we get the following new equation
$$y^2=x^3+(t^3-t-1)x^2+t^5x$$

\begin{center}
\begin{tabular}{|l|c|}\hline
Position & Kodaira-N$\acute{\text{e}}$ron type\\
\hline \hline
$t=0$ & $A_9$\\
\hline
$t=1$ & $A_3$\\
\hline
$t=\infty$ & $D_{6}$\\
\hline
\end{tabular}
\end{center} 

This configuration has $MW$-rank $2$, and has a $2$-torsion section given by $(0,0)$

\begin{center}
\begin{tabular}{|l|c|}\hline
section type & equation\\
\hline \hline
$2$-torsion & $(0,0)$\\
\hline
\end{tabular}
\end{center}

\section{Fibration 10 : $E_7,D_4,E_7$}
We use a $2$-neighbor construction on Fibration $9$. The original equation is $$y^2=x^3+(t^3-t-1)x^2+(t^3-1)(t^2+t)x-t^2(t-1)^4$$
The original fibers are $A_{9}$ at $t=0$ (roots: $a_0,a_1,\cdots$), $A_3$ at $t=1$ (roots: $b_0,b_1,b_2$), and $D_6$ at $t=\infty$ (roots: $c_0,c_1,\cdots$). The new $E_7$ fiber is given by $F=2O+4a_0+3a_1+3a_2+2a_3+2a_4+a_5+a_6$ and the new section by $w=\frac{a_0+a_1t+a_2t^2+a_3t^3+x}{t^4}$ \\

At $t=0$, the singularity is at $(0,0)$. Hence the blow-up maps $x\mapsto tx$, which gives $a_0=0$. For the second blow-up we need to translate $x$ to $x+t$. Thus, the blow up sends $x\mapsto t^2x+t$, giving $a_1=-1$. The third blow-up does not require a translation, and maps $x\mapsto t^3x+t$, which means $a_2=0$. Similarly, we find $a_4=0$. Thus, $$w=\frac{x-t}{t^4}$$ that is, $$x=wt^4+t$$ Substituting the above in the original equation, simplifying as usual, and transforming the resulting quartic into the Weierstrass form, we get the new equation
$$y^2=x^3+t^3(t+1)^2x$$

\begin{center}
\begin{tabular}{|l|c|}\hline
Position & Kodaira-N$\acute{\text{e}}$ron type\\
\hline \hline
$t=0$ & $E_7$\\
\hline
$t=-1$ & $D_4$\\
\hline
$t=\infty$ & $E_7$\\
\hline
\end{tabular}
\end{center} 

This fibration has $MW$-rank of $2$ and a $2$-torsion section

\begin{center}
\begin{tabular}{|l|c|}\hline
section type & equation\\
\hline \hline
$2$-torsion & $(0,0)$\\
\hline
non-torsion & $(t+1,t^3+1)$\\
\hline
& $(t^3(t+1),-t^3(t^3+1))$\\
\hline
\end{tabular}
\end{center}
Note that the second non-torsion section is the sum of the first non-torsion and the torsion section.

\section{Fibration 11 : $A_{15},A_3$}
We use a $2$-neighbor construction on Fibration $7$. The original equation is $$y^2=x^3+(t^3+t)x^2+t^4x$$ The original fibers are $D_{8}$ at $t=0$ (roots: $a_0,a_1,\cdots$), $D_8$ at $t=\infty$ (roots: $b_0,b_1,b_2$), $A_1$ at $t=1$ (roots: $c_0,c_1,\cdots$), and $A_1$ at $t=-1$ (roots: $d_0,d_1,\cdots$). The new $A_{15}$ fiber is given by $F=O+P+a_0+a_1+a_3+a_4+a_5+a_6+a_7+b_0+b_1+b_3+b_4+b_5+b_6+b_7$, where $P$ is the $2$-torsion section $(0,0)$. \\
The new section is $$w=\frac{a_0+a_2t^2}{t}+\frac{y}{xt}$$ The singularity at $t=0$ is at $(0,0)$. The first blow-up sends $x\mapsto tx$ and $y\mapsto ty$, giving a pole of order $1$. Continuing this for both $t=0$ and $t=\infty$, we get the new equation 
$$y^2=x^3+(t^4+1)x^2+(t^4-1)x+t^4-1$$

\begin{center}
\begin{tabular}{|l|c|}\hline
Position & Kodaira-N$\acute{\text{e}}$ron type\\
\hline \hline
$t=0$ & $A_3$\\
\hline
$t=\infty$ & $A_{15}$\\
\hline
\end{tabular}
\end{center} 

This fibration has $MW$-rank $2$, and a $4$-torsion section described below.

\begin{center}
\begin{tabular}{|l|c|}\hline
section type & equation\\
\hline \hline
$4$-torsion & $(-1,t^2)$\\
\hline
\end{tabular}
\end{center}

\section{Fibration 12 : $A_{11}, E_6$}
We use the $2$-neighbor construction on fibration $11$. The original equation is $$y^2=x^3+(t^4+1)x^2+t^4(1-t^4)x+t^8(1-t^4)$$
The original fibers are $A_{15}$ at $t=0$ (roots: $a_0,a_1,\cdots$), and $A_3$ at $t=\infty$ (roots: $b_0,b_1,b_2,\cdots)$. The new $E_6$ fiber is given by $F=2O+3a_0+2a_1+2a_2+a_3+a_4+b_0$, and the new section $w=\frac{a_0+a_1t+a_2t^2+a_4t^4+x}{t^3}$ \\
The singularity at $t=0$ is $(0,0)$, and none of the first three blow-ups require a translation. Thus we get $a_0=a_1=a_2=0$. At $t=\infty$, we replace $t$ by $\frac{1}{u}$. We need to translate $x\mapsto x-1$. Thus, the first blow-up maps $x\mapsto ux$,a nd implies that $a_4=1$. Thus, $$w=\frac{x+t^4}{t^3}$$ whence $$x=wt^3-t^4$$ Substituting the above in the original equation, simplifying and converting to the Weierstrass form, we get $$y^2=x^3+x^2+t^4x+t^8$$

\begin{center}
\begin{tabular}{|l|c|}\hline
Position & Kodaira-N$\acute{\text{e}}$ron type\\
\hline \hline
$t=0$ & $A_{11}$\\
\hline
$t=\infty$ & $E_6$\\
\hline
\end{tabular}
\end{center} 

This fibration has $MW$-rank $3$, and a $3$-torsion section, as given below.

\begin{center}
\begin{tabular}{|l|c|}\hline
section type & equation\\
\hline \hline
$3$-torsion & $(0,t^4)$\\
\hline
non-torsion & $(t^2,t^4+t^2)$\\
\hline
& $(-t^2,t^2-t^4)$\\
\hline
& $(t^4,t^6)$\\
\hline
\end{tabular}
\end{center}

\section{Fibration 13 : $D_6,D_{12}$}
We use the $2$-neighbor construction on Fibration $11$. The original equation is $$y^2=x^3+(t^4+1)x^2+t^4(1-t^4)x+t^8(1-t^4)$$
The original fibers are $A_{15}$ at $t=0$ (roots: $a_0,a_1,\cdots$), $A_3$ at $t=\infty$ (roots: $b_0,b_1,b_2,\cdots)$. The new $D_6$ fiber is given by $F=2O+2a_0+a_1+a_2+2b_0+b_1+b_2$, and the new section is $w=\frac{a_0+a_1t+a_3t^3+a_4t^4+x}{t^2}$\\
Like in the previous calculation at $t=0$, $a_0=a_1=0$. At $t=\infty$, replace $t$ by $\frac{1}{u}$ and translate $x\mapsto x-1$. the first blow-up $x\mapsto ux$ gives $a_4=1$, and the second blow-up (no translation needed) $x\mapsto u^2x$ gives $a_3=0$. Thus, $$w=\frac{x+t^4}{t^2}$$ Replacing $$x=wt^2-t^4$$ in the original equation, simplifying and converting to the Weierstrass equation, we get 
$$y^2=x^3-(t^3+t)x^2+t^6x$$

\begin{center}
\begin{tabular}{|l|c|}\hline
Position & Kodaira-N$\acute{\text{e}}$ron type\\
\hline \hline
$t=0$ & $D_{12}$\\
\hline
$t=\infty$ & $D_6$\\
\hline
\end{tabular}
\end{center} 

This has $MW$-rank of $2$.

\begin{center}
\begin{tabular}{|l|c|}\hline
section type & equation\\
\hline \hline
$2$-torsion & $(0,0)$\\
\hline
\end{tabular}
\end{center}

\section{Fibration 14 : $A_6, A_{12}$}
We use the $2$-neighbor construction on Fibration $8$. The original equation is $$y^2=x^3-tx^2-t^4(t+1)x+t^6(t+1)^2$$ 
The original fibers are $D_7$ at $t=0$ (roots: $a_0,a_1,\cdots$), $A_5$ at $t=1$ (roots: $b_0,b_1,b_2,\cdots)$, and $E_6$ at $t=\infty$ (roots: $c_0,c_1,\cdots$). The new $A_{12}$ fiber is given by $O+P+a_0+a_1+a_3+a_4+a_5+a_6+b_0+b_1+b_2+b_3+b_4$, where $P$ is the section $(0,t^3(t+1))$. The new section is $w=\frac{a(t)}{t}+b(t)\frac{y+t^3(t+1)}{xt}$. Since we want a pole of order $1$ at $t=0$ and also at $t=\infty$, we get that deg$(a(t))=2$. Subtracting suitable multiples of $t$ from the fraction, we get $a_1=0$. Also, deg$(b(t))=0$, and by rescaling, we can assume $b=1$. Thus, $$w=\frac{a_0+a_2t^2}{t}+\frac{y+t^3(t+1)}{xt}$$
As usual, blow-ups at $t=0$ and $t=\infty$ gives both $a_0=a_2=0$. Thus, $$w=\frac{y+t^3(t+1)}{tx}$$ that is, $$y=wtx-t^3(t+1)$$ Substituting the above in the original equation, cancelling common factors, converting to the Weierstrass form and simplifying, we get $$y^2=x^3+(t^4-t^3+1)x^2-t(t-1)^2x+t^2$$
\begin{center}
\begin{tabular}{|l|c|}\hline
Position & Kodaira-N$\acute{\text{e}}$ron type\\
\hline \hline
$t=0$ & $A_{6}$\\
\hline
$t=\infty$ & $A_{12}$\\
\hline
\end{tabular}
\end{center} 

This again has $MW$-rank $2$.

\begin{center}
\begin{tabular}{|l|c|}\hline
section type & equation\\
\hline \hline
non-torsion & $(0,t)$\\
\hline
\end{tabular}
\end{center}

\section{Fibration 15 : $A_{11},E_7$}
We use the $2$-neighbor construction on Fibration $14$. The original equation is $$y^2=x^3+(t^4-t+1)x^2-t^5(t-1)^2x+t^{10}$$ 
The original fibers are $A_{12}$ at $t=0$ (roots: $a_0,a_1,\cdots$), and $A_6$ at $t=\infty$ (roots: $b_0,b_1,b_2,\cdots)$. The new $E_{7}$ fiber is given by $2O+4a_0+3a_1+3a_2+2a_3+2a_4+a_5+a_6$. The new section is $w=\frac{a_0+a_1t+a_2t^2+a_3t^3+x}{t^4}$\\
At $t=0$, the singularity is already at $(0,0)$. Hence we do not need to translate, and can blow-up, sending $x\mapsto tx$. This gives $a_0=0$. Similarly, subsequent blow-ups give $a_1=a_2=a_3=0$. Thus, $$w=\frac{x}{t^4}$$ that is, $$x=wt^4$$ Substituting the above in the original equation, dividing by suitable powers of $t$ till we get a quartic in $t$, and then converting into Weierstrass form, we get 
$$y^2=x^3+(1-t)x^2+t^2(1-t)(1+t)^2x+t^4(t^3+t^2+1)$$

\begin{center}
\begin{tabular}{|l|c|}\hline
Position & Kodaira-N$\acute{\text{e}}$ron type\\
\hline \hline
$t=0$ & $A_{11}$\\
\hline
$t=\infty$ & $E_{7}$\\
\hline
\end{tabular}
\end{center} 

This has $MW$-rank $1$. The non-torsion section is described below.

\begin{center}
\begin{tabular}{|l|c|}\hline
section type & equation\\
\hline \hline
non-torsion & $(t^2,t^3)$\\
\hline
\end{tabular}
\end{center}

\section{Fibration 16 : $A_5, A_5, A_5, D_4$}
We use the $2$-neighbor construction on Fibration $4$. The original equation is 
$$y^2=x^3-(t^3+t-1)x^2+t^3(1-t^2)x+t^6(t-1)^2$$ 
The original fibers are $A_{7}$ at $t=0$ (roots: $a_0,a_1,\cdots$), $A_1$ at $t=1$ (roots: $b_0,b_1$), $D_5$ at $t=-1$ (roots: $c_0,c_1,\cdots$), and $D_5$ at $t=\infty$ (roots: $d_0,d_1,\cdots$). The new $D_{4}$ fiber is given by $2O+a_0+b_0+c_0+d_0$. The new section is $w=\frac{a_0+a_1t+a_2t^2+a_4t^4+x}{t(t-1)(t+1)}$\\
At $t=0$, we blow-up once to get $a_0=0$. At $t=1$, we get $a_1+a_2+a_4=0$. Similarly, at $t=-1$, we get $-a_1+a_2+a_4=1$. At $t=\infty$, blowing-up once gives $a_4=0$. Solving all of the above conditions, we get $a_1=1, a_2=-1$. Thus, $$w=\frac{t-t^2+x}{t(t^2-1)}$$ Substituting $$x=wt(t^2-1)+t^2-t$$ in the original equation, we get, after simplifying, 
$$y^2=x^3+x^2-t^2(t^4-1)x+t^4(t-1)^2(t+1)^2$$

\begin{center}
\begin{tabular}{|l|c|}\hline
Position & Kodaira-N$\acute{\text{e}}$ron type\\
\hline \hline
$t=0$ & $A_{5}$\\
\hline
$t=1$ & $A_5$\\
\hline
$t=-1$ & $A_5$\\
\hline
$t=\infty$ & $D_4$\\
\hline
\end{tabular}
\end{center} 

This fibration has $MW$-rank $1$. The non-torsion section is given below.

\begin{center}
\begin{tabular}{|l|c|}\hline
section type & equation\\
\hline \hline
non-torsion & $(0,t^2(t^2-1))$\\
\hline
\end{tabular}
\end{center}

\section{Fibration 17 : $A_8, A_2, A_8$}
We use the $2$-neighbor construction on Fibration $16$. The original equation is $$y^2=x^3+x^2-t^2(t^4-1)x+t^4(t-1)^2(t+1)^2$$ 
The original fibers are $A_{5}$ at $t=0$ (roots: $a_0,a_1,\cdots$), $A_5$ at $t=1$ (roots: $b_0,b_1,\cdots$), $A_5$ at $t=-1$ (roots: $c_0,c_1,\cdots$), and $D_4$ at $t=\infty$ (roots: $d_0,d_1,\cdots$). The new $A_{8}$ fiber is given by $O+P+a_0+a_1+\cdots+b_0+b_1+\cdots$, where $P=(0,t^2(t^2-1))$ is a non-zero section. The new section is $w=\frac{a_0+a_1t}{t(t-1)}+\frac{y+t^2(t^2-1)}{xt(t-1)}$. At $t=0$, we choose the branch of $A_5$ fiber such that $\frac{y}{x}=1$. Thus, $a_0=1$. Similarly, at $t=1$, we replace $t$ by $u+1$, and follow the same logic to get $a_1=0$. Thus, $$w=\frac{-1}{t(t-1)}+\frac{y+t^2(t^2-1)}{xt(t-1)}$$ whereby $$y=wtx(t-1)+x-t^2(t^2-1)$$ We substitute the above in the original equation, and follow the method described in the Appendix, followed by renaming of variables, dividing by suitable squares of polynomials in $t$, and then converting to the Weierstrass form to get 
$$y^2=x^3+(t+1)^4x^2-t^4(t+1)^2(t-1)x+t^8(t+1)^2$$

\begin{center}
\begin{tabular}{|l|c|}\hline
Position & Kodaira-N$\acute{\text{e}}$ron type\\
\hline \hline
$t=0$ & $A_{8}$\\
\hline
$t=-1$ & $A_2$\\
\hline
$t=\infty$ & $A_8$\\
\hline
\end{tabular}
\end{center} 

This has $MW$-rank $2$.

\begin{center}
\begin{tabular}{|l|c|}\hline
section type & equation\\
\hline \hline
non-torsion & $(0,t^4(t+1))$\\
\hline
\end{tabular}
\end{center}

\section{Fibration 18 : $D_{10},E_8$}
We do a $2$-neighbor construction on Fibration $15$. The original equation is 
 $$y^2=x^3+(1-t)x^2+t^2(1-t)(1+t)^2x+t^4(t^3+t^2+1)$$ 
The original fibers are $A_{11}$ at $t=0$ (roots: $a_0,a_1,\cdots$), $E_7$ at $t=\infty$ (roots: $b_0,b_1,\cdots$). The new $E_{8}$ fiber is given by $F=2O+a_0+3b_0+4b_1+5b_2+6b_3+3b_4+4b_5+2b_6$. The new section is $w=\frac{a_0+a_2t^2+a_3t^3+a_4t^4+x}{t}$\\
By blowing-up once at $t=0$, we get $a_0=0$. At $t=\infty$, we replace $t$ by $\frac{1}{u}$ as usual, and use subsequent blow-ups to get $a_2=-1$, $a_3=0$, and $a_4=0$. Thus, $$w=\frac{-t^2+x}{t}$$ that is, $$x=wt+t^2$$ Substituting the above in the original equation, dividing by suitable powers of $t$, and converting the resulting quartic into its Weierstrass form, we get

$$y^2=x^3-tx^2+t^9$$

\begin{center}
\begin{tabular}{|l|c|}\hline
Position & Kodaira-N$\acute{\text{e}}$ron type\\
\hline \hline
$t=0$ & $D_{10}$\\
\hline
$t=\infty$ & $E_8$\\
\hline
\end{tabular}
\end{center} 

This fibration has $MW$-rank $2$.

\begin{center}
\begin{tabular}{|l|c|}\hline
section type & equation\\
\hline \hline
non-torsion & $(t^4,t^6)$\\
\hline
\end{tabular}
\end{center}

\section{Fibration 19 : $A_9,A_9$}
We use the $2$-neighbor construction on Fibration $5$. The original equation is $$y^2=x^3-t^3x^2+t^3x$$ 
The original fibers are $E_{7}$ at $t=0$ (roots: $a_0,a_1,\cdots$), $A_2$ at $t=1$ (roots: $b_0,b_1,b_2$), and $D_{10}$ at $t=\infty$ (roots: $c_0,c_1,\cdots$). The new $A_{9}$ fiber is given by $F=O+P+a_0+a_1+a_2+\cdots+b_0$, where $P=(0,0)$ is the $2$-torsion section. The new section is $w=\frac{a_0+a_1t}{t(t-1)}+\frac{y}{xt(t-1)}$\\
Following the usual procedure of blowing up at $t=0$ and $t=1$, we get both $a_0=a_1=0$. Thus, $$w=\frac{y}{xt(t-1)}$$ which means $$y=wt(t-1)x$$ Substituting the above in the original equation, dividing by suitable powers of polynomials of $t$, and renaming variables, we get 

$$y^2=x^3+(t^4+1)x^2-t^2(t^2-1)x+t^4$$

\begin{center}
\begin{tabular}{|l|c|}\hline
Position & Kodaira-N$\acute{\text{e}}$ron type\\
\hline \hline
$t=0$ & $A_{9}$\\
\hline
$t=\infty$ & $A_9$\\
\hline
\end{tabular}
\end{center} 

 This fibration has $MW$-rank $2$. It also has a $5$-torsion section described below.

\begin{center}
\begin{tabular}{|l|c|}\hline
section type & equation\\
\hline \hline
$5$-torsion & $(0,t^2)$\\
\hline
non-torsion & $(-1,it)$ (defined over $\mathbb{F}_9$)\\
\hline
\end{tabular}
\end{center}

\section{Fibration 20 : $A_2,A_{17}$}
We use the $2$-neighbor construction on Fibration $5$. The original equation is $$y^2=x^3-t^3x^2+t^3x$$ 
The original fibers are $E_{7}$ at $t=0$ (roots: $a_0,a_1,\cdots$), $A_2$ at $t=1$ (roots: $b_0,b_1,b_2$), and $D_{10}$ at $t=\infty$ (roots: $c_0,c_1,\cdots$). The new $A_{17}$ fiber is given by $O+P+a_0+a_1+\cdots+c_0+c_1+\cdots$, where $P=(0,0)$ is the $2$-torsion section. The new section is $w=\frac{a_0+a_2t^2}{t}+\frac{y}{xt}$. Following the usual procedure of blowing up at $t=0$ and $t=\infty$, we get both $a_0=a_1=0$. Thus, $$w=\frac{y}{xt}$$ which means $$y=wtx$$
Substituting the above in the original equation, completing squares, and dividing by suitable powers of polynomials of $t$, and renaming variables, we get 

$$y^2=x^3+t^4x^2+t^2x+1$$

\begin{center}
\begin{tabular}{|l|c|}\hline
Position & Kodaira-N$\acute{\text{e}}$ron type\\
\hline \hline
$t=0$ & $A_{2}$\\
\hline
$t=\infty$ & $A_{17}$\\
\hline
\end{tabular}
\end{center}

This fibration has $MW$-rank of $1$. There is a $3$-torsion section described below.

\begin{center}
\begin{tabular}{|l|c|}\hline
section type & equation\\
\hline \hline
$3$-torsion & $(0,1)$\\
\hline
non-torsion & $(t,t^3+1)$\\
\hline
\end{tabular}
\end{center}

\section{Fibration 21 : $A_7,A_7,A_1,A_1,A_1,A_1$}
We use the $2$-neighbor construction on the Fibration $2$. The original equation is $$y^2=x^3-t^2(t-1)^2(t+1)^2$$ 
The original fibers are $D_{4}$ at $t=0$ (roots: $a_0,a_1,\cdots$), $D_4$ at $t=1$ (roots: $b_0,b_1,b_2$), $D_{4}$ at $t=-1$ (roots: $c_0,c_1,\cdots$), and $D_4$ at $t=\infty$ (roots: $d_0,d_1,\cdots$). The new $A_{7}$ fiber is given by $O+P+a_0+a_1+a_2+d_0+d_1+d_2$, where $P=(0,0)$ is the $2$-torsion section. The new section is $w=\frac{a_0+a_2t^2}{t}+\frac{y}{xt}$. Thus, $y=(wt-a_0-a_2t^2)x$. Following the usual procedure of blowing up at $t=0$ and $t=\infty$, we get both $a_0=a_1=0$. Thus, $$w=\frac{y}{xt}$$ which means $$y=wtx$$
Substituting the above in the original equation, dividing by suitable powers of polynomials of $t$, renaming variables, and converting into Weierstrass form, we get 

$$y^2=x^3+(t^4+1)x^2-x-(t^4+1)$$

\begin{center}
\begin{tabular}{|l|c|}\hline
Position & Kodaira-N$\acute{\text{e}}$ron type\\
\hline \hline
$t=0$ & $A_{7}$\\
\hline
$t=1$ & $A_1$\\
\hline
$t=-1$ & $A_1$\\
\hline
$t=i$ & $A_{1}$\\
\hline
$t=-i$ & $A_1$\\
\hline
$t=\infty$ & $A_{7}$\\
\hline
\end{tabular}
\end{center}

This has $MW$-rank rank $2$, and full $2$-torsion .

\begin{center}
\begin{tabular}{|l|c|}\hline
section type & equation\\
\hline \hline
$2$-torsion & $(1,0)$\\
\hline
& $(-1,0)$\\
\hline
& $(-t^4-1,0)$\\
\hline
\end{tabular}
\end{center}

\section{Fibration 22 : $A_{11},A_2,A_2,D_4$}
We use the $2$-neighbor construction on Fibration $21$. The original equation is $$y^2=x^3+(t^4+1)x^2+t^4x$$ 
The original fibers are $A_7$ at $t=0$ (roots: $a_0,a_1,\cdots$), $A_7$ at $t=\infty$ (roots: $b_0,b_1,b_2$), $A_1$ at $t=1$ (roots: $c_0,c_1$),$A_1$ at $t=-1$ (roots: $d_0,d_1$), $A_1$ at $t=i$ (roots: $e_0,e_1$), $A_1$ at $t=-i$ (roots: $f_0,f_1$). The new $D_4$ fiber is given by $F=2O+a_0+b_0+c_0+d_0$. The new section is $w=\frac{a_0+a_1t+a_2t^2+a_4t^4+x}{t(t^2-1)}$.\\
Blowing-up at $t=0$, we get $a_0=0$. At $t=1$, we replace $t$ by $u+1$, and shift $x$ to $x-1$ (so that the singularity is at $(0,0)$), and then blow-up, to get $a_1+a_2+a_4=1$. At $t=-1$, we replace $t$ by $u-1$, shift $x$ to $x-1$, and blow-up to get $-a_1+a_2+a_4=1$. At $t=\infty$, blowing-up gives $a_4=0$. Solving the above equations, we get $$a_1=0,a_2=1,a_4=0$$ Thus, $$w=\frac{t^2+x}{t(t^2-1)}$$ Substituting $$x=wt(t^2-1)-t^2$$ in the original equation, dividing by suitable powers of polynomials of $t$, renaming variables, and converting into Weierstrass form, we get 

$$y^2=x^3+x^2-t^6x$$

\begin{center}
\begin{tabular}{|l|c|}\hline
Position & Kodaira-N$\acute{\text{e}}$ron type\\
\hline \hline
$t=0$ & $A_{11}$\\
\hline
$t=i$ & $A_{2}$\\
\hline
$t=-i$ & $A_2$\\
\hline
$t=\infty$ & $D_{4}$\\
\hline

\end{tabular}
\end{center} 

This has $MW$-rank $1$, and also a $4$-torsion section.

\begin{center}
\begin{tabular}{|l|c|}\hline
section type & equation\\
\hline \hline
$4$-torsion & $(it^3,it^3)$ (defined over $\mathbb{F}_9$)\\
\hline
non-torsion & $(t^3,t^3)$\\
\hline
\end{tabular}
\end{center} 

\section{Fibration 23 : $D_6,A_3,A_3,D_6$}
We use the $2$-neighbor construction on Fibration $21$. The original equation is $$y^2=x^3+(t^4+1)x^2+t^4x$$ 
The original fibers are $A_7$ at $t=0$ (roots: $a_0,a_1,\cdots$), $A_7$ at $t=\infty$ (roots: $b_0,b_1,b_2$), $A_1$ at $t=1$ (roots: $c_0,c_1$),$A_1$ at $t=-1$ (roots: $d_0,d_1$), $A_1$ at $t=i$ (roots: $e_0,e_1$), and $A_1$ at $t=-i$ (roots: $f_0,f_1$). The new $D_6$ fiber is given by $2O+2a_0+a_1+a_2+2b_0+b_1+b_2$. The new section is $w=\frac{a_0+a_1t+a_3t^3+a_4t^4+x}{t^2}$. Blowing-up at $t=0$, we get $a_0=0=a_1$. At $t=\infty$, blowing-up, after the usual change of variables, gives $a_3=a_4=0$. Thus, $$w=\frac{x}{t^2}$$ Substituting $$x=wt^2$$ in the original equation, dividing by suitable powers of polynomials of $t$, renaming variables, and converting into Weierstrass form, we get 

$$y^2=x^3+t(t^2+1)x^2-t^4x-t^5(t^2+1)$$

\begin{center}
\begin{tabular}{|l|c|}\hline
Position & Kodaira-N$\acute{\text{e}}$ron type\\
\hline \hline
$t=0$ & $D_6$\\
\hline
$t=1$ & $A_3$\\
\hline
$t=-1$ & $A_3$\\
\hline
$t=\infty$ & $D_6$\\
\hline

\end{tabular}
\end{center} 

This has $MW$-rank of $2$, and full $2$-torsion.

\begin{center}
\begin{tabular}{|l|c|}\hline
section type & equation\\
\hline \hline
$2$-torsion & $(t^2,0)$\\
\hline
& $(-t^2,0)$\\
\hline
& $(-t^3-t,0)$\\
\hline
\end{tabular}
\end{center}

\section{Fibration 24 : $D_8,D_5,D_5$}
We use the $2$-neighbor construction on Fibration $21$. The original equation is $$y^2=x^3+(t^4+1)x^2+t^4x$$ 
The original fibers are $A_7$ at $t=0$ (roots: $a_0,a_1,\cdots$), $A_7$ at $t=\infty$ (roots: $b_0,b_1,b_2$), $A_1$ at $t=1$ (roots: $c_0,c_1$),$A_1$ at $t=-1$ (roots: $d_0,d_1$), $A_1$ at $t=i$ (roots: $e_0,e_1$), and $A_1$ at $t=-i$ (roots: $f_0,f_1$). The new $D_5$ fiber is given by $2O+2a_0+a_1+a_2+c_0+d_0$. The new section is $w=\frac{a_0+a_1t+a_2t^2+a_3t^3+x}{t^2(t^2-1)}$.\\
Like in the previous case, blowing-up at $t=0$ gives $a_0=0=a_1$. Blowing-up at $t=1$ gives $a_2+a_3=1$, and blowing up at $t=-1$ gives $a_2-a_3=1$. Solving which, we get $a_2=1, a_3=0$. Thus, $$w=\frac{t^2+x}{t^2(t^2-1)}$$ Substituting $$x=wt^2(t^2-1)-t^2$$ in the original equation, dividing by suitable powers of polynomials of $t$, renaming variables, and converting into Weierstrass form, we get 

$$y^2=x^3+(t-t^3)x^2-t^4(t+1)x+t^8$$

\begin{center}
\begin{tabular}{|l|c|}\hline
Position & Kodaira-N$\acute{\text{e}}$ron type\\
\hline \hline
$t=0$ & $D_{8}$\\
\hline
$t=-1$ & $D_5$\\
\hline
$t=\infty$ & $D_5$\\
\hline

\end{tabular}
\end{center} 

This has $MW$-rank of $2$.

\begin{center}
\begin{tabular}{|l|c|}\hline
section type & equation\\
\hline \hline
$2$-torsion & $(t^3,0)$\\
\hline
non-torsion & $(0,t^4)$\\
\hline
& $(t^2-t,-t^4+t^3-t^2)$\\
\hline
\end{tabular}
\end{center}

\section{Fibration 25 : $A_4,D_5,A_1,A_1,A_7$}
We use the $2$-neighbor construction on Fibration $21$. The original equation is $$y^2=x^3+(t^4+1)x^2+t^4x$$ 
The original fibers are $A_7$ at $t=0$ (roots: $a_0,a_1,\cdots$), $A_7$ at $t=\infty$ (roots: $b_0,b_1,b_2$), $A_1$ at $t=1$ (roots: $c_0,c_1$),$A_1$ at $t=-1$ (roots: $d_0,d_1$), $A_1$ at $t=i$ (roots: $e_0,e_1$), and $A_1$ at $t=-i$ (roots: $f_0,f_1$). The new $A_7$ fiber is given by $F=O+P+a_0+\cdots+c_0+c_1$. The new section is $w=\frac{a_0+a_1t}{t(t-1)}+\frac{y}{xt(t-1)}$.Successive blowing-ups at $t=0$ and $t=1$ give $a_0=1$ and $a_1=-1$. Thus, $$w=\frac{1-t}{t(t-1)}+\frac{y}{xt(t-1)}$$ Substituting $$y=(wt(t-1)-1+t)x$$ in the original equation, dividing by suitable powers of polynomials of $t$, renaming variables, and converting into Weierstrass form, we get $$y^2=x^3+(t^4+t^3+1)x^2+(t-1)^4(t+1)(t^2+1)x+(t-1)^4(t+1)^2(t^2+1)^2$$
which simplifies to 
$$y^2=x^3+(t^4+t^3+1)x^2-(t^4+t^3-t^2+t+1)x;$$

\begin{center}
\begin{tabular}{|l|c|}\hline
Position & Kodaira-N$\acute{\text{e}}$ron type\\
\hline \hline
$t=0$ & $D_5$\\
\hline
$t=-1$ & $A_4$\\
\hline
$t=i$ & $A_1$\\
\hline
$t=-i$ & $A_1$\\
\hline
$t=\infty$ & $A_7$\\
\hline

\end{tabular}
\end{center} 

This has $MW$-rank of $2$.

\begin{center}
\begin{tabular}{|l|c|}\hline
section type & equation\\
\hline \hline
$2$-torsion & $(0,0)$\\
\hline
\end{tabular}
\end{center}

\section{Fibration 26 : $A_5,E_7,D_7$}
We use the $2$-neighbor construction on Fibration $23$. The original equation is $$y^2=x^3+t(t^2+1)x^2-t^4x-t^5(t^2+1)$$ 
The original fibers are $D_6$ at $t=0$ (roots: $a_0,a_1,\cdots$), $A_3$ at $t=1$ (roots: $b_0,b_1,b_2,b_3$), $A_3$ at $t=-1$ (roots: $c_0,c_1,c_2,c_3$),$D_6$ at $t=\infty$ (roots: $d_0,d_1,\cdots$). The new $D_7$ fiber is given by $a_1+a_3+2a_2+2a_0+2O+2b_0+b_1+b_2$. The new section is $w=\frac{a_0+a_1t+a_2t^2+a_3t^3+x}{t^2(t-1)^2}$.\\
Successive blowing-ups at $t=0$ give $a_0=a_1=0$. Blowing-up at $t=1$ after the usual change of variables $u=t-1$, and translating $x \mapsto x+1$, gives $a_2+a_3=-1$. Blowing-up again, after translating $x\mapsto x-u$ gives $a_2=-1$. Hence, $a_3=0$. Thus, $$w=\frac{x-t^2}{t^2(t-1)^2}$$
Substituting $$x=wt^2(t-1)^2+t^2$$ in the original equation, dividing by suitable powers of polynomials of $t$, and renaming variables, we get 

$$y^2=x^3-t^3x^2-t^3x+t^6$$

\begin{center}
\begin{tabular}{|l|c|}\hline
Position & Kodaira-N$\acute{\text{e}}$ron type\\
\hline \hline
$t=0$ & $E_7$\\
\hline
$t=1$ & $A_5$\\
\hline
$t=\infty$ & $D_7$\\
\hline

\end{tabular}
\end{center}

This has $MW$-rank $1$. We describe the non-torsion section below.

\begin{center}
\begin{tabular}{|l|c|}\hline
section type & equation\\
\hline \hline
non-torsion & $(0,t^3)$\\
\hline
\end{tabular}
\end{center}

\section{Fibration 27 : $A_5,A_5,D_7$}
We use the $2$-neighbor construction on Fibration $26$. The original equation is $$y^2=x^3+t^3x^2+t^3(t^2-1)x+t^5(t-1)^2$$ 
The original fibers are $A_5$ at $t=0$ (roots: $a_0,a_1,\cdots$), $E_7$ at $t=1$ (roots: $b_0,b_1,\cdots$), $D_7$ at $t=\infty$ (roots: $c_0,c_1,\cdots$). The new $D_7$ fiber is given by $2O+2a_0+a_1+a_2+2c_0+c_1+2c_2+c_3$. The new section is $w=\frac{a_0+a_1t+a_3t^3+a_4t^4+x}{t^2}$.\\
Successive blowing-ups at $t=0$ give $a_0=a_1=0$. Similarly, blowing up at $t=\infty$ gives $a_3=a_4=0$. Thus, $$w=\frac{x}{t^2}$$ Substituting $$x=wt^2$$ in the original equation, dividing by suitable powers of polynomials of $t$, renaming variables, we get 

$$y^2=x^3+(t^3+1)x^2-(t^3-1)x$$

\begin{center}
\begin{tabular}{|l|c|}\hline
Position & Kodaira-N$\acute{\text{e}}$ron type\\
\hline \hline
$t=0$ & $A_5$\\
\hline
$t=1$ & $A_5$\\
\hline
$t=\infty$ & $D_7$\\
\hline

\end{tabular}
\end{center} 

This has a $MW$-rank of $3$. It also has full $2$ torsion.

\begin{center}
\begin{tabular}{|l|c|}\hline
section type & equation\\
\hline \hline
$2$-torsion & $(0,0)$\\
\hline
 & $(1,0)$\\
\hline
& $(1-t^3,0)$\\
\hline
\end{tabular}
\end{center}

\section{Fibration 28 : $A_5,A_{14}$}
We use the $2$-neighbor construction on Fibration $22$. The original equation is $$y^2=x^3+x^2-(t^6+t^2)x-t^4(t^4-t^2-1)$$ 
The original fibers are $A_{11}$ at $t=0$ (roots: $a_0,a_1,\cdots$), $D_4$ at $t=\infty$ (roots: $b_0,b_1,b_2,b_3,b_4$), $A_2$ at $t=1$ (roots: $c_0,c_1,c_2$), and $A_2$ at $t=-1$ (roots: $d_0,d_1,d_2$). The new $A_{14}$ fiber is given by $O+P+a_0+\cdots+c_0+c_1$, where $P=(0,t^2(t^2-1))$ is a non-zero section. The new section is $w=\frac{a_0+a_1t+\frac{y+t^2(t^2-1)}{x}}{t(t-1)}$. Successive blowing-ups at $t=0$ and $t=1$ give $a_0=1$ and $a_1=0$. Thus, $$w=\frac{1}{t(t-1)}+\frac{y+t^2(t^2-1)}{xt(t-1)}$$
Substituting $$y=(wt(t-1)-1)x-t^2(t^2-1)$$ in the original equation, dividing by suitable powers of polynomials in $t$, renaming variables, and converting into Weierstrass form, we get 

$$y^2=x^3+(t^3+1)(t-1)x^2-t^6(t-1)^2x-t^6(t^6+1)$$

\begin{center}
\begin{tabular}{|l|c|}\hline
Position & Kodaira-N$\acute{\text{e}}$ron type\\
\hline \hline
$t=0$ & $A_5$\\
\hline
$t=\infty$ & $A_{14}$\\
\hline

\end{tabular}
\end{center}

This has $MW$-rank $1$. 

\section{Fibration 29 : $A_5,A_{5},D_{10}$}
We use the $2$-neighbor construction on Fibration $22$. The original equation is $$y^2=x^3+x^2-(t^6+t^2)x-t^4(t^4-t^2-1)$$ 
The original fibers are $A_{11}$ at $t=0$ (roots: $a_0,a_1,\cdots$), $D_4$ at $t=\infty$ (roots: $b_0,b_1,b_2,b_3,b_4$), $A_2$ at $t=1$ (roots: $c_0,c_1,c_2$), and $A_2$ at $t=-1$ (roots: $d_0,d_1,d_2$). The new $A_{5}$ fiber is given by $O+P+c_0+c_1+d_0+d_1$, where $P=(0,t^2(t^2-1))$ is a non-zero section. The new section is $w=\frac{a_0+a_1t}{(t+1)(t-1)}+\frac{y+t^2(t^2-1)}{x(t+1)(t-1)}$.\\
Successive blowing-ups at $t=0$ and $t=1$ give $a_0=1$ and $a_1=0$. Thus, $$w=\frac{1}{(t+1)(t-1)}+\frac{y+t^2(t^2-1)}{x(t+1)(t-1)}$$ Substituting $$y=(w(t+1)(t-1)-1)x-t^2(t^2-1)$$ in the original equation, dividing by suitable powers of polynomials in $t$, renaming variables, and converting into Weierstrass form, we get 

$$y^2=x^3+(t^3+1)(t-1)x^2-t^6(t-1)^2x-t^6(t^6-1)$$

\begin{center}
\begin{tabular}{|l|c|}\hline
Position & Kodaira-N$\acute{\text{e}}$ron type\\
\hline \hline
$t=0$ & $A_5$\\
\hline
$t=1$ & $D_{10}$\\
\hline
$t=\infty$ & $A_5$\\
\hline

\end{tabular}
\end{center} 

The trivial lattice for this fibration is of rank $22$, hence the $MW$-rank is $0$. We also have full $2$-torsion (descriptions given below). The absolute discriminant of the trivial lattice is $6\cdot 6\cdot 4=144$. Then since we have full $2$-torsion, we find, from the Shioda-Tate formula that the full lattice is indeed the full $NS(X)$,since it has absolute discriminant $3^2$, and signature $(1,21)$.

\begin{center}
\begin{tabular}{|l|c|}\hline
section type & equation\\
\hline \hline
$2$-torsion & $(t^4-t^3,0)$\\
\hline
 & $(-t^4+t^3,0)$\\
\hline
& $(-t^4+t^3-t+1,0)$\\
\hline
\end{tabular}
\end{center}

\section{Fibration 30 : $A_3,A_{9},A_{6}$}
We use the $2$-neighbor construction on Fibration $22$. The original equation is 
$$y^2=x^3+x^2-(t^6+t^2)x-t^4(t^4-t^2-1)$$ 
The original fibers are $A_{11}$ at $t=0$ (roots: $a_0,a_1,\cdots$), $D_4$ at $t=\infty$ (roots: $b_0,b_1,b_2,b_3,b_4$), $A_2$ at $t=1$ (roots: $c_0,c_1,c_2$), and $A_2$ at $t=-1$ (roots: $d_0,d_1,d_2$). The new $A_{6}$ fiber is given by $O+P+c_0+c_1+b_0+b_1+b_2$, where $P=(0,t^2(t^2-1))$ is a non-zero section. The new section is $w=\frac{a_0+a_2t^2}{t-1}+\frac{y+t^2(t^2-1)}{x(t-1)}$.\\
Successive blowing-ups at $t=1$ and $t=\infty$ give $a_0=1$ and $a_1=0$. Thus, $$w=\frac{1}{t-1}+\frac{y+t^2(t^2-1)}{x(t-1)}$$ Substituting $$y=(w(t-1)-1)x-t^2(t^2-1)$$ in the original equation, dividing by suitable powers of polynomials in $t$, renaming variables, and converting into Weierstrass form, we get 

$$y^2=x^3+(t^4+t-1)x^2+t^2(t^3+t^2+1)x+t^4(t-1)^2$$

\begin{center}
\begin{tabular}{|l|c|}\hline
Position & Kodaira-N$\acute{\text{e}}$ron type\\
\hline \hline
$t=0$ & $A_3$\\
\hline
$t=1$ & $A_9$\\
\hline
$t=\infty$ & $A_6$\\
\hline

\end{tabular}
\end{center} 

This has $MW$-rank $2$.

\begin{center}
\begin{tabular}{|l|c|}\hline
section type & equation\\
\hline \hline
non-torsion & $(0,t^2(t-1))$\\
\hline
\end{tabular}
\end{center}

\section{Fibration 31 : $D_6, A_{12}$}
We use the $2$-neighbor construction on Fibration $24$. The original equation is $$y^2=x^3+(t^3-t^2)x^2-t^3(t+1)x+t^4$$ 
The original fibers are $D_{5}$ at $t=0$ (roots: $a_0,a_1,\cdots$), $D_5$ at $t=-1$ (roots: $b_0,b_1,\cdots$), and $D_{8}$ at $t=\infty$ (roots: $c_0,c_1,\cdots$). The new $A_{12}$ fiber is given by $F=O+P+a_0+a_1+\cdots+c_0+c_1+\cdots$, where $P=(0,t^2)$ is a non-zero section. The new section is $w=\frac{a_0+a_2t^2}{t}+\frac{y+t^2}{xt}$.\\
Successive blowing-ups at $t=0$ and $t=\infty$ give $a_0=0$ and $a_2=0$. Thus, $$w=\frac{y+t^2}{tx}$$ Substituting $$y=wtx-t^2$$ in the original equation, dividing by suitable powers of polynomials in $t$, renaming variables, and converting into Weierstrass form, we get 

$$y^2=x^3+(t^4+t^3+t)x^2+(-t^4+t^2+t)x+t^4-t^3-t^2+t-1$$

\begin{center}
\begin{tabular}{|l|c|}\hline
Position & Kodaira-N$\acute{\text{e}}$ron type\\
\hline \hline
$t=0$ & $D_6$\\
\hline
$t=\infty$ & $A_{12}$\\
\hline

\end{tabular}
\end{center} 

This has $MW$-rank $2$.

\section{Fibration 32 : $A_5,A_2,A_5,A_5$}
We use the $2$-neighbor construction on Fibration $25$. The original equation is $$y^2=x^3+(t^4-t^3+t)x^2+t^4(t-1)(t^2-t-1)x+t^4(t-1)^2(t^2-t-1)^2$$ 
The original fibers are $D_{5}$ at $t=0$ (roots: $a_0,a_1,\cdots$), $A_4$ at $t=1$ (roots: $b_0,b_1,\cdots$), $A_7$ at $t=\infty$ (roots: $c_0,c_1,\cdots$), $A_1$ at $t=i-1$ (roots: $d_0,d_1$), and $A_1$ at $t=-i-1$ (roots: $e_0,e_1$). The new $A_{5}$ fiber is given by $F=O+P+b_0+b_1+c_0+c_1$, where $P=(0,t^2(t-1)(t^2-t-1))$ is a non-zero section. The new section is $w=\frac{a_0+a_2t^2}{t-1}+\frac{y+t^2(t-1)(t^2-t-1)}{x(t-1)}$.\\
Successive blowing-ups at $t=1$ and $t=\infty$ give $$a_0=0 \text{ and } a_2=-1$$. Thus, $$w=\frac{-t^2}{t-1}+\frac{y+t^2(t-1)(t^2-t-1)}{x(t-1)}$$ Substituting the above in the original equation, dividing by suitable powers of polynomials in $t$, renaming variables, and converting into Weierstrass form, we get 

$$y^2=x^3+(t^4-t^3-t+1)x^2+(t^8+t^7+t^6)x+t^9+t^6$$

\begin{center}
\begin{tabular}{|l|c|}\hline
Position & Kodaira-N$\acute{\text{e}}$ron type\\
\hline \hline
$t=0$ & $A_5$\\
\hline
$t=1$ & $A_5$\\
\hline
$t=-1$ & $A_5$\\
\hline
$t=\infty$ & $A_2$\\
\hline

\end{tabular}
\end{center} 

This fibration has $MW$-rank $3$.

\begin{center}
\begin{tabular}{|l|c|}\hline
section type & equation\\
\hline \hline
non-torsion & $(-t-1,it)$\\
\hline
 & $(t^2,(t-1)t^2(t+1)^2)$\\
\hline
\end{tabular}
\end{center}

\section{Fibration 33 : $A_8,D_4,E_6$}
We use the $2$-neighbor construction on Fibration $29$. The original equation is $$y^2=x^3+(t^4+t^3-t+1)x^2+(t^5-t^3-t^2-t)x+t^6+t^5+t^3+t^2$$ 
The original fibers are $A_9$ at $t=0$ (roots: $a_0,a_1,\cdots$), $A_6$ at $t=1$ (roots: $b_0,b_1,\cdots$), and $A_3$ at $t=\infty$ (roots: $c_0,c_1,c_2,c_3)$. The new $E_{6}$ fiber is given by $F=2O+3a_0+2a_1+2a_2+a_9+a_8+b_0$. The new section is $w=\frac{a_0+a_1t+a_2t^2+a_4t^4+x}{t^3}$. Successive blowing-ups at $t=0$ give $a_0=0, a_1=1, a_2=-1$ and blowing up at $t=1$ gives $a_4=0$. Thus$$w=\frac{t-t^2+x}{t^3}$$
Substituting $$x=wt^3+t^2-t$$ in the original equation, dividing by suitable powers of polynomials in $t$, renaming variables, and converting into Weierstrass form, we get 

$$y^2=x^3+(t+1)x^2-(t^5+t^4)x+t^8$$

\begin{center}
\begin{tabular}{|l|c|}\hline
Position & Kodaira-N$\acute{\text{e}}$ron type\\
\hline \hline
$t=0$ & $A_8$\\
\hline
$t=-1$ & $D_4$\\
\hline
$t=\infty$ & $E_6$\\
\hline

\end{tabular}
\end{center} 

This fibration has $MW$-rank $2$.

\begin{center}
\begin{tabular}{|l|c|}\hline
section type & equation\\
\hline \hline
non-torsion & $(0,t^4)$\\
\hline
\end{tabular}
\end{center}

\section{Fibration 34 : $A_8,A_2,A_2,D_7$}
We use the $2$-neighbor construction on Fibration $31$. The original equation is $$y^2=x^3+(t^4+t^3+t)x^2-(t^5+t^3)x+t^6-t^5$$ 
The original fibers are $D_6$ at $t=0$ (roots: $a_0,a_1,\cdots$), $A_{12}$ at $t=\infty$ (roots: $b_0,b_1,\cdots$). The new $D_{7}$ fiber is given by $2O+2a_0+2a_1+a_2+a_3+2b_0+b_1+b_2$. The new section is $w=\frac{a_0+a_1t+a_3t^3+a_4t^4+x}{t^2}$. Successive blowing-ups at $t=0$ give $a_0=0$, $a_1=0$ and blowing up at $t=\infty$ gives $a_3=a_4=0$. Thus $$w=\frac{x}{t^2}$$ 
Substituting $$x=wt^2$$ in the original equation, dividing by suitable powers of polynomials in $t$, renaming variables, and converting into Weierstrass form, we get 
 
$$y^2=x^3+(t^3+1)x^2+t^3x+t^6$$

\begin{center}
\begin{tabular}{|l|c|}\hline
Position & Kodaira-N$\acute{\text{e}}$ron type\\
\hline \hline
$t=0$ & $A_8$\\
\hline
$t=-1-i$ & $A_2$\\
\hline
$t=-1+i$ & $A_2$\\
\hline
$t=\infty$ & $D_7$\\
\hline

\end{tabular}
\end{center} 

This has $MW$-rank $1$.

\begin{center}
\begin{tabular}{|l|c|}\hline
section type & equation\\
\hline \hline
non-torsion & $(-1,t^3)$\\
\hline
\end{tabular}
\end{center}

\section{Fibration 35 : $D_9,D_9$}
We use the $2$-neighbor construction on Fibration $31$. The original equation is 
$$y^2=x^3+(t^4+t^3+t)x^2-(t^5+t^3)x+t^6-t^5$$ 
The original fibers are $D_6$ at $t=0$ (roots: $a_0,a_1,\cdots$), and $A_{12}$ at $t=\infty$ (roots: $b_0,b_1,\cdots$). The new $D_{9}$ fiber is given by $2O+2a_0+2a_2+2a_3+2a_4+a_5+a_6+2b_0+b_1+b_{12}$. The new section is $w=\frac{a_0+a_1t+a_3t^3+a_4t^4+x}{t^2}$.\\
Successive blowing-ups at $t=0$ give $a_0=0, a_1=0$ and blowing up at $t=\infty$ gives $a_3=1,a_4=0$. Thus $$w=\frac{t^3+x}{t^2}$$ Substituting $$x=wt^2-t^3$$ in the original equation, dividing by suitable powers of polynomials in $t$, renaming variables, and converting into Weierstrass form, we get 

$$y^2=x^3+(t^3+t)x^2+(t^6-t^5)x+t^8$$

\begin{center}
\begin{tabular}{|l|c|}\hline
Position & Kodaira-N$\acute{\text{e}}$ron type\\
\hline \hline
$t=0$ & $D_9$\\
\hline
$t=\infty$ & $D_9$\\
\hline

\end{tabular}
\end{center} 

This has $MW$-rank $2$.

\begin{center}
\begin{tabular}{|l|c|}\hline
section type & equation\\
\hline \hline
non-torsion & $(0,t^4)$\\
\hline
\end{tabular}
\end{center}

\section{Fibration 36 : $D_{13},E_6$}
We use the $2$-neighbor construction on Fibration $34$. The original equation is $$y^2=x^3+(t^3+t)x^2+(t^6-t^5)x+t^8$$ 
The original fibers are $D_9$ at $t=0$ (roots: $a_0,a_1,\cdots$), and $D_{9}$ at $t=\infty$ (roots: $b_0,b_1,\cdots$). The new $D_{13}$ fiber is given by $2O+2a_0+2a_1+2a_3+\cdots+2b_0+2b_1+b_2+b_3$. The new section is $w=\frac{a_0+a_1t+a_3t^3+a_4t^4+x}{t^2}$.\\
Successive blowing-ups at $t=0$ and $t=\infty$ give $a_0=a_1=a_3=a_4=0$. Thus $$w=\frac{x}{t^2}$$ Substituting $$x=wt^2-t^3$$ in the original equation, dividing by suitable powers of polynomials in $t$, renaming variables, and converting into Weierstrass form, we get 

$$y^2=x^3-tx^2+t^{12}$$

\begin{center}
\begin{tabular}{|l|c|}\hline
Position & Kodaira-N$\acute{\text{e}}$ron type\\
\hline \hline
$t=0$ & $D_{13}$\\
\hline
$t=\infty$ & $E_6$\\
\hline

\end{tabular}
\end{center} 

This has $MW$-rank of $1$.

\begin{center}
\begin{tabular}{|l|c|}\hline
section type & equation\\
\hline \hline
non-torsion & $(0,t^6)$\\
\hline
\end{tabular}
\end{center}

\section{Fibration 37 : $D_{18}$}
We use the $2$-neighbor construction on Fibration $35$. The original equation is $$y^2=x^3+(t^3+t)x^2+(t^6-t^5)x+t^8$$ 
The original fibers are $D_9$ at $t=0$ (roots: $a_0,a_1,\cdots$), $D_{9}$ at $t=\infty$ (roots: $b_0,b_1,\cdots$). The new $D_{18}$ fiber is given by $F= 2O+2a_0+2a_1+2a_3+\cdots+2b_0+2b_1+2b_3+\cdots$. The new section is $w=\frac{a_0+a_1t+a_3t^3+a_4t^4+x}{t^2}$.\\
Successive blowing-ups at $t=0$ and $t=\infty$ give $a_0=a_3=a_4=0, a_1=1$. Thus $$w=\frac{x}{t^2}$$ Substituting $$x=wt^2-t^3$$ in the original equation, dividing by suitable powers of polynomials in $t$, renaming variables, and converting into Weierstrass form, we get 

$$y^2=x^3-(t^4+t^3+t)x^2+(t^7+t^6)x+t^{12}-t^{11}$$

\begin{center}
\begin{tabular}{|l|c|}\hline
Position & Kodaira-N$\acute{\text{e}}$ron type\\
\hline \hline
$t=0$ & $D_{18}$\\
\hline

\end{tabular}
\end{center} 

This again has $MW$-rank $2$. 

\section{Fibration 38 : $E_6,E_6,E_8$}
We use the $2$-neighbor construction on Fibration $24$. The original equation is $$y^2=x^3+(-t^3+t)x^2-t^4(t+1)x+t^8$$ 
The original fibers are $D_8$ at $t=0$ (roots: $a_0,a_1,\cdots$), $D_{5}$ at $t=-1$ (roots: $b_0,b_1,\cdots$), and $D_5$ at $t=\infty$ (roots: $c_0,c_1,\cdots$). The new $E_{8}$ fiber is given by $F= 2O+4a_0+3a_1+6a_2+5a_3+4a_4+3a_5+2a_6+a_7$. The new section is $w=\frac{a_0+a_1t+a_2t^2+a_3t^3+x}{t^4}$.\\
Successive blowing-ups at give $a_0=a_1=a_2=a_3=0$. Thus $$w=\frac{x}{t^4}$$ Substituting $$x=wt^4$$ in the original equation, dividing by suitable powers of polynomials in $t$, renaming variables, and converting into Weierstrass form, we get 

$$y^2=x^3+t^3(t+1)^4$$

\begin{center}
\begin{tabular}{|l|c|}\hline
Position & Kodaira-N$\acute{\text{e}}$ron type\\
\hline \hline
$t=\infty$ & $E_{8}$\\
\hline
 & $E_6$\\
\hline
& $E_6$\\
\hline

\end{tabular}
\end{center} 

This pseudo-elliptic fibration has $MW$-rank $0$, since the trivial lattice is already of rank $22$. The absolute discriminant of the trivial lattice is already $3^2$. Thus, the trivial lattice is itself the full $NS(X)$, with signature $(1,21)$ and the prescribed absolute discriminant $3^2$.

\section{Fibration 39 : $A_9,D_9$}
We use the $2$-neighbor construction on Fibration $1$. The original equation is $$y^2=x^3-(t^3+1)x^2+t^6x$$ 
The original fibers are $A_{11}$ at $t=0$ (roots: $a_0,a_1,\cdots$), $A_{2}$ at $t=1$ (roots: $b_0,b_1,b_2$), and $D_7$ at $t=\infty$ (roots: $c_0,c_1,\cdots$). The new $D_{9}$ fiber is given by $F= 2O+a_0+b_0+a_2+2c_0+2c_1+2c_3+\cdots$. The new section is $w=\frac{a_0+a_1t+a_3t^3+a_4t^4+x}{t(t-1)}$.\\
Blowing-up at $t=0$ gives $a_0=0$. Blowing-up at $t=1$ requires translating $x \mapsto x+1 $, and gives $a_1+a_3+a_4=-1$. Blowing-up at $t=\infty$ gives $a_3=a_4=0$. Thus, $a_1=-1$. Thus $$w=\frac{x-t}{t(t-1)}$$ Substituting $$x=wt(t-1)+t$$ in the original equation, dividing by suitable powers of polynomials in $t$, renaming variables, and converting into Weierstrass form, we get 

$$y^2=x^3+(t^3-t-1)x^2+(t^5+t^3)x+t^7-t^6$$

\begin{center}
\begin{tabular}{|l|c|}\hline
Position & Kodaira-N$\acute{\text{e}}$ron type\\
\hline \hline
$t=0$ & $A_{9}$\\
\hline
$t=\infty$ & $D_9$\\
\hline

\end{tabular}
\end{center} 

This has $MW$-rank $2$.

\begin{center}
\begin{tabular}{|l|c|}\hline
section type & equation\\
\hline \hline
non-torsion & $(t^2,it^2)$ (defined over $\mathbb{F}_9$)\\
\hline
\end{tabular}
\end{center}

\section{Fibration 40 : $A_2,A_2,D_{16}$}
We use the $2$-neighbor construction on Fibration $5$. The original equation is $$y^2=x^3-tx^2+t^5x$$ 
The original fibers are $D_{10}$ at $t=0$ (roots: $a_0,a_1,\cdots$), $A_{2}$ at $t=1$ (roots: $b_0,b_1,b_2$), and $E_7$ at $t=\infty$ (roots: $c_0,c_1,\cdots$). The new $D_{16}$ fiber is given by $F= 2O+2a_0+2a_1+2a_3+\cdots+2c_0+2c_1+2c_3+\cdots$. The new section is $w=\frac{a_0+a_1t+a_3t^3+a_4t^4+x}{t^2}$.\\
Blowing-up at $t=0$ and $t=\infty$ gives $a_0=a_3=a_4=0$, and $a_1=-1$. Thus $$w=\frac{-t+x}{t^2}$$ Substituting $$x=wt^2+t$$ in the original equation, dividing by suitable powers of polynomials in $t$, renaming variables, and converting into Weierstrass form, we get 

$$y^2=x^3+(t^4+t)x^2+t^8x$$

\begin{center}
\begin{tabular}{|l|c|}\hline
Position & Kodaira-N$\acute{\text{e}}$ron type\\
\hline \hline
$t=0$ & $D_{16}$\\
\hline
$t=1$ & $A_2$\\
\hline
$t=\infty$ & $A_2$\\
\hline

\end{tabular}
\end{center} 

Here the trivial lattice is of rank $22$, hence the $MW$-rank is $0$. The absolute discriminant of the trivial lattice is $3\cdot 3\cdot 4$. And the fibration has a clear $2$-torsion section, $(0,0)$. Thus, the trivial lattice, together with the $2$-torsion section gives a lattice of signature $(1,21)$ and absolute discriminant $3^2$. Hence it must be the full $NS(X)$.

\begin{center}
\begin{tabular}{|l|c|}\hline
section type & equation\\
\hline \hline
$2$-torsion & $(0,0)$\\
\hline
\end{tabular}
\end{center}

\section{Fibration 41 : $A_2,A_2,A_2,A_2,A_2,A_2,A_2,A_2,A_2,A_2$}
The equation of this surface is due to Ito. 

$$y^2=x^3+t^{10}+t^2$$

The fibers are defined over the base $\mathbb{P}^1(\mathbb{F}_9)$, with an $A_2$ fiber at each point of the projective line. 

\begin{center}
\begin{tabular}{|l|c|}\hline
Position & Kodaira-N$\acute{\text{e}}$ron type\\
\hline \hline
$t=0$ & $A_{2}$\\
\hline
$t=1$ & $A_2$\\
\hline
$t=-1$ & $A_2$\\
\hline
$t=i$ & $A_2$\\
\hline
$t=1+i$ & $A_2$\\
\hline
$t=-1+i$ & $A_2$\\
\hline
$t=-i$ & $A_2$\\
\hline
$t=1-i$ & $A_2$\\
\hline
$t=-1-i$ & $A_2$\\
\hline
$t=\infty$ & $A_2$\\
\hline

\end{tabular}
\end{center} 

For this pseudo-elliptic surface, the trivial lattice is already of rank $22$, and of absolute discriminant $3^{10}$. The torsion group is of order $3^4$, as described below. Together, the trivial lattice and the $3$-torsion sections account for a lattice of signature $(1,21)$ and absolute discriminant $3^2$. Hence this gives the full $NS(X)$.

\begin{center}
\begin{tabular}{|l|c|}\hline
section type & equation\\
\hline \hline
$3$-torsion & $(t^2,t(t^4-1))$\\
\hline
& $(t^2,-t(t^4-1))$\\
\hline
& $(-t^2,t(t^4+1))$\\
\hline
& $(-t^2,-t(t^4+1))$\\
\hline
& $(t^6,t(t^8-1))$\\
\hline
& $(t^6,-t(t^8-1))$\\
\hline
& $(\frac{1}{t^2},t^5+\frac{1}{t^2})$\\
\hline
& $(\frac{1}{t^2},-t^5-\frac{1}{t^2})$\\
\hline
\end{tabular}
\end{center}

\section{Fibration 42 : $D_4,A_2,A_2,A_5,A_5$}
We use the $2$-neighbor construction on Fibration $40$. The original equation is $$y^2=x^3+t^{10}+t^2$$
The original fibers are $A_2$'s at all the points of $\mathbb{P}^1(\mathbb{F}_9)$. The new $A_5$ fiber we consider is $F=O+P+a_0+a_1+b_0+b_1$, where $a_0$ and $b_0$ are the identity components of the fibers at $t=0$ and $t=1$ respectively, $a_1$ and $b_1$ are non-identity components of the same, and $P$ is the $3$-torsion section $(t^2,-t(t^4-1))$. Our new section is $w=\frac{a_0+a_1t}{t(t-1)}+\frac{y+t(t^4-1)}{(x-t^2)t(t-1)}$.\\
Blowing-up successively at $t=0$ and $t=1$ gives $a_0=a_1=0$. Substituting the same in the above definition of $w$, and replacing $$y=wt(t-1)(x-t^2)-t(t^4-1)$$ in the original equation, converting to the Weierstrass form after completing squares, renaming variables, and then simplifying, we get 
$$y^2=x^3+(t^3+1)x^2+(t^3-t^6)x$$

\begin{center}
\begin{tabular}{|l|c|}\hline
Position & Kodaira-N$\acute{\text{e}}$ron type\\
\hline \hline
$t=0$ & $A_5$\\
\hline
$t=1$ & $A_5$\\
\hline
$t=-1-i$ & $A_2$\\
\hline
$t=-1+i$ & $A_2$\\
\hline
$t=\infty$ & $D_4$\\
\hline

\end{tabular}
\end{center} 

This has $MW$-rank $2$. This also has a $2$-torsion section, which does not come from a $4$-torsion section.

\begin{center}
\begin{tabular}{|l|c|}\hline
section type & equation\\
\hline \hline
$2$-torsion & $(0,0)$\\
\hline
non-torsion & $(-1,t^3)$\\
\hline
\end{tabular}
\end{center}

\section{Fibration 43 : $A_4,A_4,D_5,D_5$}
We use the $2$-neighbor construction on Fibration $41$. The original equation is $$y^2=x^3+(t^3+1)x^2+(t^3-t^6)x$$
The original fibers are $A_5$ at $t=0$ (Roots: $a_0,a_1,\cdots$), $A_5$ at $t=1$ (Roots: $b_0,b_1,\cdots$), $A_2$ at $t=-i-1$, $A_2$ at $t=-i+1$, and $D_4$ at $t=\infty$. The new $D_5$ fiber we consider is $F=O+2a_0+a_1+a_5+b_0$, where $a_0$ and $b_0$ are the identity components of the fibers at $t=0$ and $t=1$ respectively, $a_i$'s are non-identity components of the $t=0$ fiber. Our new section is $w=\frac{a_0+a_1t+a_2t^2+a_4t^4+x}{t^2(t-1)}$.\\
Blowing-up successively at $t=0$ and $t=1$ gives $a_0=a_1=a_2=a_4=0$. Thus, $$w=\frac{x}{t^2(t-1)}$$ Substituting $$x=wt^2(t-1)$$ in the original equation, converting to the Weierstrass form after completing squares, renaming variables, and then simplifying, we get 
$$y^2=x^3-(t^3+t)x^2-t^2(t^2+t-1)x+t^4(t^2+t-1)^2$$

\begin{center}
\begin{tabular}{|l|c|}\hline
Position & Kodaira-N$\acute{\text{e}}$ron type\\
\hline \hline
$t=0$ & $D_5$\\
\hline
$t=1+i$ & $A_4$\\
\hline
$t=1-i$ & $A_4$\\
\hline
$t=\infty$ & $D_5$\\
\hline

\end{tabular}
\end{center}

This has $MW$-rank $2$.

\begin{center}
\begin{tabular}{|l|c|}\hline
section type & equation\\
\hline \hline
non-torsion & $(0,t^2(t^2+t-1))$\\
\hline
& $(t^2+t-1,t(t+1)(t^2+t-1))$\\
\hline
\end{tabular}
\end{center}

\section{Fibration 44 : $A_3,A_3,A_3,A_3,A_3,A_3$}
We use the $2$-neighbor construction on Fibration $41$. The original equation is $$y^2=x^3+(t^3+1)x^2+(t^3-t^6)x$$
The original fibers are $A_5$ at $t=0$, $A_5$ at $t=1$, $A_2$ at $t=-i-1$ (Roots: $c_0,c_1,c_2$), $A_2$ at $t=-i+1$ (Roots: $d_0,d_1,d_2$), and $D_4$ at $t=\infty$. The new $A_3$ fiber we consider is $F=O+P+c_0+d_0$, where $c_0$ and $d_0$ are the identity components of the fibers at $t=-i-1$ and $t=i-1$ respectively, and $P$ is the $2$-torsion section $(0,0)$. Our new section is $w=\frac{a_0+a_1t}{t^2-t-1}+\frac{y}{x(t^2-t-1)}$.\\
Blowing-up successively at $t=0$ and $t=1$ gives $a_0=a_1=0$. Thus, $$w=\frac{y}{x(t^2-t-1)}$$ Substituting $$y=wx(t^2-t-1)$$ in the original equation, converting to the Weierstrass form after completing squares, renaming variables, and then simplifying, we get 
$$y^2=x^3-(t^4+1)x^2+x+t^8-t^4$$

\begin{center}
\begin{tabular}{|l|c|}\hline
Position & Kodaira-N$\acute{\text{e}}$ron type\\
\hline \hline
$t=0$ & $A_3$\\
\hline
$t=1$ & $A_3$\\
\hline
$t=-1$ & $A_3$\\
\hline
$t=i$ & $A_3$\\
\hline
$t=-i$ & $A_3$\\
\hline
$t=\infty$ & $A_3$\\
\hline

\end{tabular}
\end{center} 

This has $MW$-rank $2$.

\begin{center}
\begin{tabular}{|l|c|}\hline
section type & equation\\
\hline \hline
non-torsion & $(t^2,t(t^2-1))$\\
\hline
\end{tabular}
\end{center}

\section{Fibration 45 : $A_{18}$}
 This was calculated by Schuett in \cite{Sc} 
$$y^2=x^3+(t^4+t^3+1)x^2+(-t^3-t^2+t)x+t^2+t+1$$ 
is the supersingular $K3$ with an $A_{18}$ fiber.
\begin{center}
\begin{tabular}{|l|c|}\hline
Position & Kodaira-N$\acute{\text{e}}$ron type\\
\hline \hline
$t=\infty$ & $A_{18}$\\
\hline

\end{tabular}
\end{center} 

 This has $MW$-rank $2$.

\begin{center}
\begin{tabular}{|l|c|}\hline
section type & equation\\
\hline \hline
 & $(0,t-1)$\\
\hline
\end{tabular}
\end{center}

\section{Fibration 46 : $A_5,A_5,A_8$}
We use the $2$-neighbor construction on Fibration $41$. The original equation is $$y^2=x^3+(t^3+1)x^2+(t^3-t^6)x$$
The original fibers are $A_5$ at $t=0$, $A_5$ at $t=1$, $A_2$ at $t=-i-1$ (Roots: $c_0,c_1,c_2$), $A_2$ at $t=-i+1$ (Roots: $d_0,d_1,d_2$), and $D_4$ at $t=\infty$ (Roots: $e_0,e_1,e_2,e_3,e_4$). The new $A_8$ fiber we consider is $F=O+a_0+a_1+a_2+a_3+P+e_0+e_1+e_2$, where $a_0$ and $e_0$ are the identity components of the fibers at $t=0$ and $t=\infty$ respectively, and $P$ is the $2$-torsion section $(0,0)$. Our new section is $w=\frac{a_0+a_1t}{t}+\frac{y}{xt}$. Our new section is $w=\frac{a_0+a_1t}{t}+\frac{y}{xt}$.\\
Blowing-up successively at $t=0$ and $t=\infty$ gives $a_0=1$, $a_1=0$. Thus, $$w=\frac{1}{t}+\frac{y}{xt}$$ Substituting in the original equation, converting to the Weierstrass form after completing squares, renaming variables, and then simplifying, we get 
$$y^2=x^3+(t^4+t)x^2-t^5x+t^6-t^3-1$$

\begin{center}
\begin{tabular}{|l|c|}\hline
Position & Kodaira-N$\acute{\text{e}}$ron type\\
\hline \hline

$t=i+1$ & $A_5$\\
\hline
$t=-i+1$ & $A_5$\\
\hline
$t=\infty$ & $A_8$\\
\hline

\end{tabular}
\end{center}

This has $MW$-rank $2$.

\section{Fibration 47 : $A_4,A_4,A_4,A_4,A_1,A_1$}
We use the $2$-neighbor construction on Fibration $43$. The original equation is $$y^2=x^3-(t^4+1)x^2+(t^4-1)x+t^8+t^4+1$$
The original fibers are $A_3$'s at $t=0$, $t=1$, $t=-1$, $t=i$, $t=-i$ and at $t=\infty$. The new $A_4$ fiber we consider is $F=O+P+a_0+b_0+b_1$, where $a_0$ and $b_0$ are the identity components of the fibers at $t=0$ and $t=1$ respectively, $b_1$ is a non-identity fiber at $t=1$, and $P$ is the section $(0,t^4-1)$. Our new section is $w=\frac{a_0+a_1t}{t(t-1)}+\frac{y+t^4-1}{xt(t-1)}$.\\
Blowing-up successively at $t=0$ and $t=1$ gives $a_0=a_1=1$. Thus, $$w=\frac{1+t}{t(t-1)}+\frac{y+t^4-1}{xt(t-1)}$$ Substituting in the original equation, converting to the Weierstrass form after completing squares, renaming variables, and then simplifying, we get 
$$y^2=x^3+(-t^4+t^3-t-1)x^2+(t^7-t^6+t^5)x-t^{10}-t^9+t^8-t^7-t^6-t^5$$

\begin{center}
\begin{tabular}{|l|c|}\hline
Position & Kodaira-N$\acute{\text{e}}$ron type\\
\hline \hline

$t=i$ & $A_1$\\
\hline
$t=-i$ & $A_1$\\
\hline
$t=0$ & $A_4$\\
\hline
$t=i+1$ & $A_4$\\
\hline
$t=-i+1$ & $A_4$\\
\hline
$t=\infty$ & $A_4$\\
\hline

\end{tabular}
\end{center} 

This has $MW$-rank $2$.

\section{Fibration 48 : $A_2,A_2,E_8,E_8$}
We use the $3$-neighbor construction on the fiber configuration $E_6,E_6,E_8$. The original equation is $$y^2=x^3+t^4*(t-1)^5$$
The original fibers are $E_6$ at $t=0$ (Roots: $p_0,p_1,p_2, p_3,p_4,p_5,p_6$ where $p_0,p_4,p_6$ are the simple components and $p_2$ is the component with multiplicity $3$), $E_6$ at $t=\infty$ (Roots: $q_0,q_1,q_2,q_3,q_4,q_5,q_6$ where $q_0,q_4,q_6$ are the simple components and $q_2$ is the component with multiplicity $3$), $E_8$ at $t=1$ (Roots: $r_0,r_1,r_2,\dots$). The new $E_8$ fiber we consider is $F=3O+4p_0+5p_1+6p_2+4p_3+2p_4+3p_5+2q_0+q_1$. Our new section is $w=\frac{y+(a_0+a_1t+a_2t^2)x+b_0+b_1t+b_2t^2+b_3t^3+b_5t^5+b_6t^6}{t^4}$. \\
Blowing-up successively at $t=0$ and $t=\infty$ gives $a_0=0,a_1=0,a_2=0,b_0=0,b_1=0,b_2=0,b_3=i,b_5=0,b_6=0$. Thus, $$w=\frac{y+it^3}{t^4}$$ Substituting in the original equation, converting to the Weierstrass form after completing squares, renaming variables, and then simplifying, we get 
$$y^2=x^3+(t^2+1)^2(t^3+t^2+1)$$

This is a pseudo-elliptic fibration with $MW$-rank $0$. Also since the determinant obtained from the fiber components is already $-9$, hence it doesn't have torsion sections either.

\section{Fibration 49 : $A_2,A_2,A_2,A_2,E_6,E_6$}
We use the $3$-neighbor construction on the fiber configuration $E_6,E_6,E_8$. The original equation is $$y^2=x^3+t^4*(t-1)^5$$
The original fibers are $E_6$ at $t=0$ (Roots: $p_0,p_1,p_2, p_3,p_4,p_5,p_6$ where $p_0,p_4,p_6$ are the simple components and $p_2$ is the component with multiplicity $3$), $E_6$ at $t=\infty$ (Roots: $q_0,q_1,q_2,q_3,q_4,q_5,q_6$ where $q_0,q_4,q_6$ are the simple components and $q_2$ is the component with multiplicity $3$), $E_8$ at $t=1$ (Roots: $r_0,r_1,r_2,\dots$, where $r_0$ is the simple component and $r_1$ is the multiplicity $2$ component intersecting $r_0$). The new $E_6$ fiber we consider is $F=3O+2p_0+p_1+2q_0+q_1+2r_0+r_1$. Our new section is $w=\frac{y+(a_0+a_1t+a_2t^2)x+b_0+b_1t+b_2t^2+b_3t^3+b_5t^5+b_6t^6}{t^2(t-1)^2}$. \\
Blowing-up successively at $t=0$ and $t=\infty$ gives $a_0=0,a_1=0,a_2=0,b_0=0,b_1=0,b_2=0,b_3=0,b_5=0,b_6=0$. Thus, $$w=\frac{y}{t^2(t-1)^2}$$ Substituting $y=wt^2(t-1)^2$ in the original equation, converting to the Weierstrass form after completing squares, renaming variables, and then simplifying, we get 
$$y^2=x^3+t^4(t^2+1)^2$$

This is a pseudo-elliptic fibration with $MW$-rank $0$. It has $3$-torsion sections. 

\begin{center}
\begin{tabular}{|l|c|}\hline
section type & equation\\
\hline \hline
$3$-torsion & $(0,t^2(t^2+1))$\\
\hline
 & $(0,-t^2(t^2+1))$\\
\hline
 & $(-t^2,t^2(t^2-1))$\\
\hline
 & $(-t^2,-t^2(t^2-1))$\\
\hline
\end{tabular}
\end{center}

\section{Fibration 50 : $A_2,E_6,E_6,E_6$}
We use a $2$-neighbor construction on the fiber configuration $A_5,D_7,E_6$. The original equation is $$y^2=x^3+(t^4-t)x^2+t^6$$
The original fibers are $A_5$ at $t=0$ (Roots: $p_0,p_1,p_2, p_3,\dots$ where $p_0$ intersects $O$, the zero section), $D_7$ at $t=\infty$ (Roots: $q_0,q_1,q_2,\dots$ where $q_0$ intersects $O$), $E_6$ at $t=1$. The new $E_6$ fiber we consider is $F=2O+3p_0+2p_1+p_2+2p_5+p_4+q_0$. Our new section is $w=\frac{a_0+a_1t+a_2t^2+a_4t^4+x}{t^3}$. \\
Blowing-up successively at $t=0$ and $t=\infty$ gives $a_0=a_1=a_2=a_4=0$. Thus, $$w=\frac{x}{t^3}$$ Substituting $x=wt^3$ in the original equation, converting to the Weierstrass form after completing squares, renaming variables, and then simplifying, we get 
$$y^2=x^3+(t^2+1)t^4(t-1)^2$$

This is another pseudo-elliptic fibration with MW rank $0$. 

\section{Fibration 51 : $A_3,A_3,A_6,A_6$}
We use a $2$-neighbor construction on the fiber configuration $A_8,A_2,A_2,D_7$. The original equation is $$y^2=x^3+(t^3+1)x^2+t^3x+t^6$$
The original fibers are $A_8$ at $t=0$ (Roots: $p_0,p_1,p_2, p_3,\dots$ where $p_0$ intersects $O$, the zero section), $A_2$ at $t=-i-1$ (Roots: $q_0,q_1,q_2$ where $q_0$ intersects $O$), $A_2$ at $t=i-1$ (Roots: $r_0,r_1,r_2$, where $r_0$ intersects $O$), and $D_7$ at $t=\infty$ (Roots:$s_0,s_1,\dots$). The new $A_3$ fiber we consider is $F=O+P+p_0+q_0$, where $P$ is the section $(-1,t^3)$. Our new section is $w=\frac{a_0+a_1t+\frac{y+t^3}{x+1}}{t(t+i+1)}$. \\
Blowing-up successively at $t=0$ and $t=-i-1$ gives $a_0=0,a_1=i-1$. Thus, $$w=\frac{(i-1)t+\frac{y+t^3}{x+1}}{t(t+i+1)}$$ Substituting in the original equation, converting to the Weierstrass form after completing squares, renaming variables, and then simplifying, we get 
$$y^2=x^3+(-it^4-t^3+(i-1)t+1)x^2-t^2(t-i)(t^3+it^2+t-i-1)x+t^4(t-i)^2((1-i)t^2-i)$$

\begin{center}
\begin{tabular}{|l|c|}\hline
Position & Kodaira-N$\acute{\text{e}}$ron type\\
\hline \hline

$t=0$ & $A_6$\\
\hline
$t=1$ & $A_3$\\
\hline
$t=i$ & $A_6$\\
\hline
$t=\infty$ & $A_3$\\
\hline

\end{tabular}
\end{center} 

This has MW rank $2$. 

\section{Fibration 52 : $A_6,A_6,D_6$}
We use a $2$-neighbor construction on the fiber configuration $A_3,A_3,A_6,A_6$. The original equation is $$y^2=x^3+(-it^4-(i+1)t^3-t-1)x^2+(-t^6-it^5+t^4-(i+1)t^3+-it^2)x+(1-i)t^8+(1-i)t^7-it^5-(i+1)t^4$$
The original fibers are $A_3$ at $t=0$ (Roots: $p_0,p_1,p_2, p_3$ where $p_0$ intersects $O$, the zero section), $A_3$ at $t=\infty$ (Roots: $q_0,q_1,q_2,q_3$ where $q_0$ intersects $O$), $A_6$ at $t=i$, and $A_6$ at $t=1$. The new $D_6$ fiber we consider is $F=2O+2p_0+p_1+p_3+2q_0+q_1+q_3$. Our new section is $w=\frac{a_0+a_1t+a_3t^3+a_4t^4+x}{t^2}$. \\
Blowing-up successively at $t=0$ and $t=\infty$ gives $a_0=a_1=a_3=a_4=0$. Thus, $$w=\frac{x}{t^2}$$ Substituting $x=wt^2$ in the original equation, converting to the Weierstrass form after completing squares, renaming variables, and then simplifying, we get 
$$y^2=x^3-(it^3+it)*x^2+(t^8-it^7+(i+1)t^6+(-i+1)t^4+it^3-it^2+(i-1)t-1)x+(it^{12}+it^{11}+(i+1)t^{10}-it^9-it^8+t^7-t^6+(i+1)t^5+(i-1)t^4+it^2+i+1)$$

This has MW rank $2$.
\clearpage

\section{Appendix A}

Here we describe the method of $2$-neighbor and $3$-neighbor construction, which I learnt from my advisor Prof. Abhinav Kumar. For a more detailed description, we refer to \cite{Ku1}. 

\textit{2-neighbor and 3-neighbor construction}

$X$ is an elliptic $K3$ surface, defined over the base field $k$ of characteristic $3$, whose minimal model is given by $y^2 = x^3 + a_2(t) x^2 + a_4(t)x + a_6(t)$ ($a_i\in k[t]$, deg $a_i\leq 2i$). $X$ admits an elliptic fibration over $\mathbb{P}^1$, with generic fiber $E/k(t)$. $O$ denotes the zero section and $P,Q$ etc. are other sections. We want to compute the global sections of $\mathcal{O}_X(a_1O+a_2P+a_3Q+\dots+C)$. We know that $\mathcal{O}_X(P)$ is a line bundle of degree $1$, where $P$ is a section. Riemann-Roch implies $h^0(nP)=n$, with sections 
$$1 \text{ generating }H^0(\mathcal{O}_X(P))$$
$$1^2, x  \text{ generating }H^0(\mathcal{O}_X(2P))$$
$$1^3,1.x,y \text{ generating }H^0(\mathcal{O}_X(3P))$$
etc.\\

For the $2$-neighbor construction, we often consider $D = 2O$. The basis for $\Gamma(\mathcal{O}_X(D))$ is in this case given by $1$ and $x$. If $F_1$, an elliptic divisor, is given by $F_1 = D + C$, then we have $2$ linearly independent global sections to $\Gamma(\mathcal{O}_X(F_1))$, given by $1$ and $a(t) +b(t)x$ for some $a(t), b(t)$. The new elliptic parameter, which we denote by $w$ throughout in this paper, is given by the ratio of these two sections. Thus $w=a(t)+b(t)x$. Here, $a(t)$ is a rational function with numerator of degree at most $4$, and $b(t)$ has a constant numerator, which we assume to be $1$ by rescaling $x$. The denominators of $a(t)$ and $b(t)$ are polynomials giving the correct order of poles at the singular fibers. Then we follow Tate's algorithm of successive blow-ups to pin down the coefficients of $a(t)$, $b(t)$ so that they give the correct order of poles at the respective irreducible components of the fibers. Ultimately we write $x = (w - b(t))/a(t)$. This when substituted into the original equation gives 
$$
y^2 = h(t,u)
$$
where $h$, after suitable change of coordinates to absorb square factors into $y^2$, is a cubic or quartic polynomial in $t$.

The other case when $D = O + P$ is similar, and has been explained in \cite{Ku1}.

In some cases we need to use the $3$-neighbor construction, in which our divisor $D$ is of the form $D=3O+G$ or $2O+P+G$ and so on. In this case the basis for the space of global sections is given by $1,x,y$. Thus we look at sections of the form $w=a(t)+b(t)x+c(t)y$, where $a(t)$ is a rational function with numerator a polynomial of degree $6$, the numerator of $b(t)$ is one of degree $2$, and that of $c(t)$ is a constant which we assume to be $1$ by rescaling $y$. All these rational functions have denominators giving the correct order of poles at the singular fibers. We then use successive blow-ups, as usual, to pin down $w$. Next step is to write $y$ in terms of $x$, $t$ and $w$, and replace in the original equation. This we then simplify to get an equation in $x$ and $t$ of total degree $3$, with coefficients in $k[w]$. This we then convert into a Weierstrass form, using a rational point on the curve.  

The method of conversion of the cubic or quartic into the Weierstrass form can be read off from \cite{Ku1}

\section{Acknowledgements}

I thank Abhinav Kumar and Noam Elkies for many helpful discussions and suggestions. The computer algebra systems PARI/gp and Maxima were used in the calculations for this paper. I thank the developers of these programs.

\end{document}